\newtheorem{definition}{Definition}[section]
\newtheorem{theorem}[definition]{Theorem}
\newtheorem{corollary}[definition]{Corollary}
\newtheorem{remark}[definition]{Remark}
\newtheorem{example}[definition]{Example}
\newtheorem{proposition}[definition]{Proposition}
\newcommand{\M}{\mathcal{M}}
\newcommand{\R}{\mathbb{R}}
\newcommand{\C}{\mathbb{C}}
\newcommand{\N}{\mathbb{N}}
\begin{document}

\title[Law of large numbers for non-linear traces]{Law of large numbers for non-linear traces of the 
Choquet type on finite factors}

\author{Masaru Nagisa}
\address[Masaru Nagisa]{Department of Mathematics and Informatics, Faculty of Science, Chiba University, 
Chiba, 263-8522,  Japan: \ Department of Mathematical Sciences, Ritsumeikan University, Kusatsu, Shiga, 525-8577,  Japan}
\email{nagisa@math.s.chiba-u.ac.jp}
\author{Yasuo Watatani}
\address[Yasuo Watatani]{Department of Mathematical Sciences,
Kyushu University, Motooka, Fukuoka, 819-0395, Japan}
\email{watatani@math.kyushu-u.ac.jp}

\maketitle

\begin{abstract}
We introduced non-linear traces of the Choquet type and the Sugeno type on semi-finite factors $\M$ in \cite{nagisawatatani4}
as a non-commutative analog of the Choquet integral and Sugeno integral for non-additive measures. 
We need a weighted dimension function on the projections of $\M$, which is an analog of 
a monotone measure.  
In this paper, we study the law of large numbers for non-linear traces of the Choquet type on finite factors $\M$.   
Since averages do not converge in general, we study the range of their 
accumulation points, that is, we estimate their limit supremum and limit infimum.  We examine the trials of sequences 
consisting of 
self-adjoint operators, which appear in coin toss or Powers' binary shifts. We have also found some unexpected examples of Powers' binary shifts which satisfy what we call 
 {\it the uniform norm law of large numbers}. 
This is an attempt at non-linear and non-commutative probability theory on matrix algebras and factors of 
type ${\rm II}_1$. 

\medskip\par\noindent
AMS subject classification: Primary 46L53. Secondary 46L51, 47B06, 47B10, 60F15.

\medskip\par\noindent
Key words: law of large numbers, non-linear trace, finite factor, was
non-commutative probability theory

\end{abstract}

\section{Introduction}

We studied several classes of general non-linear positive maps between $C^*$-algebras 
in \cite{nagisawatatani}.  We introduced non-linear traces  
of the Choquet type and the Sugeno type on matrix algebras in \cite{nagisawatatani2}, on the algebra $K(H)$ of 
compact operators in \cite{nagisawatatani3}, and on semi-finite factors in \cite{nagisawatatani4}.   

Ando-Choi \cite{A-C} and Arveson \cite{Ar2} initiated the study of  non-linear completely 
positive maps.   
Hiai-Nakamura \cite{H-N} studied a non-linear counterpart of Arveson's 
Hahn-Banach type extension theorem \cite {Ar1}. 
Bel\c{t}it\u{a}-Neeb \cite{B-N} studied non-linear completely positive maps and dilation theorems for real involutive algebras.
Dadkhah-Moslehian \cite{D-M} studied some properties of non-linear positive maps 
like Lieb maps and the multiplicative domain for 3-positive maps. 
Dadkhah-Moslehian-Kian \cite{D-M-K} examined the continuity of non-linear positive maps between $C^*$-algebras.

The functional calculus by a continuous positive function is an important example of non-linear positive maps as in  
\cite{bhatia1}, \cite{bhatia2}, and  \cite{H}. 
In particular, the functional calculus by operator monotone functions is crucial for studying operator means 
in Kubo-Ando theory \cite{kuboando}.  
Another important motivation for the study of non-linear positive maps on $C^*$-algebras is
non-additive measure theory, which was initiated by Sugeno \cite{Su} and  Dobrakov \cite{dobrakov}. 
Choquet integrals \cite{Ch} and Sugeno integrals \cite{Su} are studied as non-linear integrals in 
non-additive measure theory, see \cite{wang} for example.  
They have partial additivities. 
Choquet integrals have  comonotonic additivity and 
Sugeno integrals have comonotonic F-additivity (fuzzy additivity). Conversely,  it is known that 
these partial additivities characterize 
Choquet integrals and Sugeno integrals, see, for example, \cite{De}, 
\cite{schmeidler}, \cite{C-B}, \cite{C-M-R}, \cite{D-G}. 
We also note that the inclusion-exclusion integral by Honda-Okazaki \cite{H-O} for non-additive monotone measures is a general notion of non-linear integrals. 

In our paper \cite{nagisawatatani4}, we introduced non-linear traces of the Choquet type and Sugeno type on a semifinite factor $M$ as a non-commutative analog of the Choquet integral and Sugeno integral for non-additive measures. 
We need a weighted dimension function $p \mapsto \alpha(\tau(p))$ for any projection $p \in M$, which is an analog of 
a monotone measure.

On the other hand, both non-additive probabilities and the law of large numbers are studied in Bernoulli's 
{\it Ars Conjectandi} (1713).  Recently, in various fields,  several people have studied the law of large numbers for 
non-additive probability measures, for example, see \cite{chen}, \cite{M-M}, \cite{peng}, \cite{teran}, and 
their references. 

In this paper, we study the law of large numbers for non-linear traces of the Choquet type on finite factors $\M$ 
under the assumption that $\alpha$ is concave or convex. Since averages do not converge in general by non-linearity, 
we study the range of their accumulation points, that is, we estimate their limit supremum and limit infimum. We study the cases of coin tosses 
which appear in a maximal abelian subalgebra of the AFD factor of type $II_1$  and sequences of self-adjoint unitaries 
of Powers' binary shifts.   We have also found some unexpected examples of Powers' binary shifts which satisfy what we call 
the  {\it the uniform norm law of large numbers}.  To study the law of large numbers for sequences of self-adjoint operators, 
we need to consider extensions of non-linear traces of the Choquet type on the positive operators to the self-adjoint operators.  
The extensions are non-trivial due to the non-linearity of traces of the Choquet type.  We remark that we have Chebyshev's inequality for non-linear traces of the Choquet type.  

This is an attempt at non-linear and non-commutative probability theory on matrix algebras and factors of 
type $II_1$.

\vspace{3mm}

This work was supported by JSPS KAKENHI Grant Numbers \linebreak
JP17K18739 and JP23K03151
and also supported by the Research Institute for Mathematical Sciences, an International Joint Usage/Research Center located in Kyoto University.


\section{Non-linear traces of the Choquet type}
In our paper \cite{nagisawatatani4}, we introduced non-linear traces of the Choquet type and Sugeno type on a semifinite factor $M$ as a non-commutative analog of the Choquet integral and Sugeno integral for non-additive measures. 
We need a weighted dimension function $p \mapsto \alpha(\tau(p))$ for any projection $p \in M$, which is an analog of 
a monotone measure.  In this paper,  we only consider non-linear traces of the Choquet type on a {\it finite factor} $\M$.  
We begin to recall the notion of non-linear traces of the Choquet type.  
\begin{definition} \rm
Let $\Omega$ be a set and ${\mathcal B}$ a ring of sets on $\Omega$, that is, 
${\mathcal B}$ is a family of subsets of $\Omega$ which is closed under the operations 
union $\cup$ and set theoretical difference $\backslash$. Hence  ${\mathcal B}$ is also 
closed under intersection $\cap$. 
A function $\mu: {\mathcal B} \rightarrow [0, \infty]$ is  called a 
{\it monotone measure} if $\mu$ satisfies 
\begin{enumerate}
\item[$(1)$]  $\mu(\emptyset) = 0$, and 
\item[$(2)$] For any $A,B \in {\mathcal B}$, if $A \subset B$, 
then $\mu(A) \leq \mu(B)$. 
\end{enumerate}
If $\Omega$ is not a discrete space, we often assume that ${\mathcal B}$ is a $\sigma$-field.

\end{definition}

\begin{definition} \rm (Choquet integral)
Let $\Omega$ be a set and ${\mathcal B}$ a $\sigma$-field on $\Omega$.  
Let  $\mu: {\mathcal B} \rightarrow [0, \infty]$ be a monotone measure on $\Omega$. 
Let $f$ be a non-negative measurable function on $\Omega$ in the sense that $f^{-1}(A) \in {\mathcal B}$ 
for any Borel set $A$ on $[0, \infty]$. Then the Choquet integral of $f$ is defined by 
\begin{align*}
{\rm (C)} \int f d\mu & := \int_0^{\infty} \mu ( \{ x \in \Omega \ | \ f(x) \geq s \}) ds \\
& =  \int_0^{\infty} \mu ( \{ x \in \Omega \ | \ f(x) > s \}) ds.  
\end{align*}
\end{definition}

\begin{remark} \rm  If $f$ is a simple function with 
$$
f = \sum_{i=1}^{n} a_i \chi _{A_i},  \ \ \  A_i \cap A_j = \emptyset  \  ( i\not=j),  
$$
then the Choquet integral of $f$ is given by 

$$
{\rm (C)}\int f d\mu = \sum_{i=1}^{n-1} (a_{\sigma(i) }- a_{\sigma(i+1)})\mu(B_i) 
+ a_{\sigma(n) }\mu(B_n) , 
$$
where $\sigma$ is a permutation on $\{1,2,\dots n\}$ such that 
$a_{\sigma(1)} \geq a_{\sigma(2)} \geq  \dots \geq a_{\sigma(n)}$ 
and
$B_i := A_{\sigma(1)} \cup A_{\sigma(2)} \cup \dots \cup A_{\sigma(i)}$. 
\end{remark}

Choquet integral has the following properties: \\
 (1) (monotonicity)  
For any  non-negative measurable functions $f$ and $g$  on $\Omega$, 

$$
0 \leq f \leq g \Rightarrow 0 \leq {\rm (C)} \int f d \mu \leq   {\rm (C)}\int g d \mu .
$$ 
\noindent
(2) (comonotone additivity) 
For any  non-negative measurable functions $f$ and $g$  on $\Omega$, 
if $f$ and $g$ are comonotone, then 
$$
{\rm (C)} \int (f + g)  d \mu
= {\rm (C)} \int f d \mu + {\rm (C)}\int g d \mu ,
$$  
where $f$ and $g$ are {\it comonotone} if 
for any $s,t \in \Omega$ we have  that 
$$
(f(s) -f(t))(g(s)-g(t)) \geq 0, 
$$
that is, 
$$
f(s) < f(t)   \Rightarrow   g(s)  \leq g(t) . 
$$ 

\noindent
(3) (positive homogeneity)
For any  non-negative measurable function $f$ on $\Omega$ and 
any scalar $k \geq 0$, 
$$
 {\rm (C)} \int kf d \mu = k  \Bigl( {\rm (C)} \int f d \mu \Bigr) , 
 $$
where $0 \cdot \infty = 0$.

\begin{remark} \rm 
It is important to note that the Choquet integral can be {\it essentially} characterized as a non-linear monotone positive functional 
which is positively homogeneous and comonotonic additive.  
\end{remark}

\begin{remark} \rm
When $\Omega = {\mathbb R}$, if $f$ and $g$ are monotone increasing, then  $f$ and $g$ are comonotone. 
\end{remark}

For a topological space $X$, let $C(X)$ be the set of all continuous functions on $X$ and 
$C(X)^+$ be the set of all non-negative continuous functions on $X$. More generally, for a $C^*$-algebra $A$, we denote by $A^+$ the set of positive operators of $A$.  

\begin{definition}  \rm Let $A$ be a $C^*$-algebra.  A non-linear positive map 
$\varphi : A^+ \rightarrow  [0, \infty]$ is called a {\it trace} if 
$\varphi$ is unitarily invariant, that is, 
$\varphi(uau^*) = \varphi(a)$ for any $a \in A^+$ and any unitary 
$u$ in the multiplier algebra $M$ of $A$.  
\begin{itemize}
  \item  $\varphi$ is {\it monotone} if 
$a \leq b$ implies  $\varphi(a) \leq  \varphi(b)$ for any $a,b \in  A^+$. 
  \item  $\varphi$ is {\it positively homogeneous} if 
$\varphi(ka) = k\varphi(a)$ for any $a \in A^+$ and any  scalar $k \geq 0$, 
where we regard $0 \cdot \infty = 0$. 
  \item $\varphi$ is {\it comonotonic additive on the spectrum} if 
$$
\varphi(f(a)  + g(a)) = \varphi(f(a)) + \varphi(g(a)) 
$$  
for any $a \in A^+$ and 
any comonotonic functions $f$ and $g$  in $C(\sigma(a))^+$ ( with $f(0) = g(0) = 0$ if $a$ is not invertible), where 
$f(a)$ is a functional calculus of $a$ by $f$ and  $C(\sigma(a))$ is 
the set of continuous functions on the spectrum $\sigma(a)$ of $a$. 
  \item $\varphi$ is {\it monotonic increasing additive on the spectrum} if 
$$
\varphi(f(a)  + g(a)) = \varphi(f(a)) + \varphi(g(a))
$$  
for any $a \in A^+$ and 
any monotone increasing functions $f$ and $g$  in $C(\sigma(a))^+$ (with $f(0)=g(0)=0$ if $a$ is not invertible). 
We may replace $C(\sigma(a))^+$ by $C([0,\infty))^+$, since any monotone increasing function in $C(\sigma(a))^+$ 
can be extended to a monotone increasing function in $C([0,\infty))^+$.  
Then by induction, 
we also have 
$$
 \varphi(\sum_{i=1}^n f_i(a)) = \sum_{i=1}^n \varphi (f_i(a))
$$
for any monotone increasing functions $f_1, f_2, \dots, f_n$ in  $C(\sigma(a))^+$  ( with  $f_i(0) = 0$ for $i=1,2,\ldots, n$ if 
$a$ is not invertible). 
\end{itemize}
\end{definition}

Now we  recall the notions of the generalized singular values (also called generalized $s$-numbers) 
on semifinite von Neumann algebras. 
Murray-von Neumann \cite{murrayvonneumann} started to study the generalized singular values 
for finite factors. The general case of semifinite von Neumann algebras was studied by M. Sonis  \cite{sonis}, 
V. I. Ovchinnikov \cite{ovchinnikov}, F. J. Yeadon \cite{yeason}, and Fack-Kosaki \cite{F-K}.
For the generalized singular values  and the majorization, see also Murray-von Neumann \cite{murrayvonneumann}, 
Kamei \cite{kamei}, Fack \cite{F}, Hiai-Nakamura \cite{H-N2}, Harada-Kosaki \cite{H-K}, \cite{K-S} and we also recommend a book \cite{L-S-Z} by Lord-Sukochev-Zanin on the Calkin correspondence for semi-finite von Neumann algebras.

Let $\M$ be a semifinite von Neumann algebra on a Hilbert space $H$ with a faithful normal semifinite trace $\tau$.  A densely defined closed operator 
$a$ affiliated with $\M$ is said to be {\it $\tau$-measurable} if for each $\epsilon > 0$ there exists a projection $p$ in $\M$ such that  $p(H) \subset D(a)$ and $\tau(I-p) < \epsilon$. Let $a = u|a|$ be the polar decomposition of $a$ and 
$|a| = \int_0^{\infty} \lambda d e_{\lambda}$ be the spectral decomposition of $|a|$. Then $a$ is $\tau$-measurable if and only if 
$\lim _{\lambda  \to \infty} \tau(I- e_{\lambda}) = \lim _{\lambda  \to \infty} \tau(e_{({\lambda}, \infty)} )= 0$.  We denote by  $\tilde{\M}$ 
the set of  $\tau$-measurable operators. For a positive self-adjoint operator $a = \int_0^{\infty} \lambda d e_{\lambda}$ 
affiliated with $\M$, we set 
$$
\tau(a) =  \int_0^{\infty} \lambda d\tau( e_{\lambda}). 
$$
 For $p$ with $1 \leq p \leq \infty$, the non-commutative $L^p$-space $L^p(\M;\tau)$   
is the Banach space consisting of all $a \in \tilde{\M}$ with $||a||_p := \tau(|a|^p)^{1/p} < \infty$, see, for example, \cite{nelson}, \cite{segal}, \cite{terp}, and \cite{F-K}. 
Here  $L^{\infty}(\M;\tau) = \M$. 
We denote by $\tilde{S}$ the closure of $L^1(\M)$ in  $\tilde{\M}$ in the measure topology, where the measure topology on  $\tilde{\M}$ is the topology whose fundamental 
system of neighborhoods around $0$ is given by 
$$
V(\epsilon, \delta) = \{a \in \tilde{\M} | \exists p \in \M : \text{projection s.t.}  
\|ap\| <\epsilon \text{ and }  \tau(I-p) < \delta\}. 
$$
Then $\M$ is dense in  $\tilde{\M}$ in the measure topology.  

\begin{definition} \rm Let $\M$ be a semifinite von Neumann algebra on a Hilbert space $H$ with a faithful normal semifinite trace $\tau$.  
Let  $a$ be a positive self-adjoint operator affiliated with $\M$ with the spectral decomposition $a =  \int_0^{\infty} \lambda d e_{\lambda}(a)$, 
where $e_t = e_{(-\infty,t]}$ is a spectral projection. 
For $t \geq 0$, the ``generalized $t$-th eigenvalue''
$\lambda_t(a)$ of $a$ is defined by 
$$
\lambda_t(a) := \inf \{s \geq 0 \; |\; \tau(e_{(s, \infty)}(a)) \leq t \} , 
$$
where we put $\inf A = + \infty$ if $A$ is an empty set, by convention.  If $a$ is in $M^+$, then $\lambda_0(a) = \|a \|$. 
Furthermore, if $a$ is a  $\tau$-measurable operator and $t > 0$, then $\lambda_t(a) < \infty$ and it is known that 
$\lambda_t(a)$ is also expressed by 
$$
\lambda_t(a) = \inf \{\|ap\| \ |\; \  p \in \M : \text{ projection s.t. } \tau(I-p) \leq t\}. 
$$
We also have a min-max type expression 
$$
\lambda_t(a) = \inf \{\sup _{\xi \in pH, \| \xi \| = 1} \langle a\xi,\xi \rangle  \ | \  p \in \M : \text{ projection s.t. } \tau(I-p) \leq t\}. 
$$
For a general  operator $a$ which is affiliated with $\M$, 
the `` generalized $t$-th singular value''  (i.e. generalized s-number)   
$\mu_t(a)$ of $a$ is defined by 
$\mu_t(a) := \lambda_t(|a|) $.  The map $\mu(a) := [0,\infty) \ni t \mapsto \mu_t(a) \in [0,\infty]$ 
is sometimes called the generalized singular value function of $a$ and satisfies the following properties:
\begin{enumerate}
\item[(1)] The generalized singular value function $\mu(a):t \mapsto \mu_t(a)$ is decreasing and right-continuous. 
\item[(2)] If $0 \leq a \leq b$, then  $\mu_t(a) \leq  \mu_t(b)$. 
\item[(3)] $\mu_{s +t}(a+b) \leq  \mu_s(a)  + \mu_t(b)$. 
\item[(4)] $\mu_{s +t}(ab) \leq  \mu_s(a)  \mu_t(b)$. 
\item[(5)] $\mu_{t}(abc)  \leq \|a\|\mu_{t}(b) \|c\|$.
\item[(6)] $\mu_t(a^*) = \mu_t(a)$. 
\item[(7)] For any unitary $u \in M$, $\mu_t(uau^*) = \mu_t(a)$. 
\item[(8)] $|\mu_t(a) - \mu_t(b)| \leq \|a-b\|$.
\item[(9)] If $a \geq 0$ and $f: [0,\infty) \rightarrow [0,\infty)$ is a continuous and increasing function, then 
$\mu_t(f(a)) = f(\mu_t(a))$. 
\item[(10)] If $\tau (I) = 1$, then $\mu_t(a) = 0$ for all $t >1$. 
\end{enumerate}
See, for example, \cite{F-K}, \cite{fan}, \cite{L-S-Z}, and \cite{ovchinnikov}. 
We note that $\tilde{S}$ is the subspace of $\tilde{\M}$ 
consisting of all operators in $\tilde{\M}$ whose generalized singular value functions vanish at infinity; see \cite{D-P} and 
\cite{D-P-S} more on noncommutative integration theory and operator theory.

For a self-adjont operator $a \in \M$ with a finite trace $\tau$ ,  ``generalized $t$-th eigenvalue''
$\lambda_t(a)$ of $a$  can be  defined by 
$$
\lambda_t(a) := \inf \{s \in \mathbb R \; |\; \tau(e_{(s, \infty)}(a)) \leq t \} , \ \ (t \in [0, \tau(I)).
$$
See \cite{petz} for example, 
although we do not use it in this paper.  For a positive operator $a \in M$,  $\lambda_t(a) = \mu_t(a)$. So 
this does not cause any confusion. 
\end{definition}

\begin{example} \rm If $\M=B(H)$ is a factor of type ${\rm I}_{\infty}$, then $\tilde{\M} = B(H)$ and 
$\tilde{S} = K(H)$.  
\end{example}

\begin{example} \rm Let  $\M=L^{\infty}(\Omega,\mu)$ for some semifinite measure space $(\Omega,\mu)$. 
Then   $\tilde{M}$ consists of measurable functions $f$ on $\Omega$ such that $f$ is bounded except on a set of finite 
measure. In this case, ``generalized $t$-th eigenvalue ``
$\lambda_t(f)$ of positive function $f$ corresponds to the decreasing rearrangement $f^*$ of $f$:
$$
 f^*(t) := \inf \{s \geq 0 \; | \; \mu(\{x  \in \Omega \ | \ |f(x)| > s \}) \leq t \} . 
$$
\end{example}

Now we recall non-linear traces of the Choquet type on finite factor $\M$. 
First,  we assume that $\M$ is a factor of type $I_n$, that is, $\M = M_n({\mathbb C})$

\begin{definition} \rm
Let $\alpha: \{0,1,2, \dots, n\} \rightarrow [0, 1]$ be a 
monotone increasing function with $\alpha(0) = 0$ and $\alpha(n) = 1$ .

Let 
$\lambda(a) = (\lambda_1(a),\lambda_2(a),\dots, \lambda_n(a))$ be the list of the 
eigenvalues of $a \in (M_n({\mathbb C}))^+ $ in decreasing order :
$\lambda_1(a) \geq \lambda_2(a) \geq \dots \geq \lambda_n(a)$ with 
counting multiplicities. 
Define 
$\varphi_{\alpha} : (M_n({\mathbb C}))^+ \rightarrow  {\mathbb C}^+$ 
as follows:
\begin{align*}
\varphi_{\alpha}(a) 
& :=  \sum_{i=1} ^{n-1} ( \lambda_i(a)- \lambda_{i+1}(a)) \alpha(i) 
+ \lambda_n (a) \alpha(n)\\
& = \sum_{i=2} ^{n}  \lambda_i(a)( \alpha(i) -  \alpha(i-1)) +  \lambda_1(a)\alpha(1).
\end{align*}
We call $\varphi_{\alpha}$ the non-linear trace of the Choquet type associated with $\alpha$.  
\end{definition}

In the rest of this section, we assume that $\M$ is a factor of type ${\rm II}_1$ with a faithful normal trace 
$\tau$  with $\tau(I) = 1.$ 

\begin{definition} \rm Let $\M$ be a factor of  type ${\rm II}_1$.
Let $\alpha: [0,1]  \rightarrow [0, 1]$ be a 
monotone increasing function with $\alpha(0) = 0$ and $\alpha(1) = 1$ .   Assume that $\alpha$ is left continuous. 
We think of a weighted dimension function $p \mapsto \alpha(\tau(p))$ for any projection $p \in \M$ as an analog of 
a monotone measure.  
Define a non-linear trace $\varphi_{\alpha} : \M^+ \rightarrow  [0, \infty)$ of the Choquet type associated with $\alpha$ 
as follows: For $a \in \M^+$ with the spectral decomposition $a = \int_0^{\infty} \lambda d e_{\lambda}(a)$,  let  
$$
\varphi_{\alpha}(a) : = \int_0^{\infty}  \alpha(\tau(e_{(s,\infty)}(a)))ds. 
$$
Then for any unitary $u \in \M$, we have that 
$$
\varphi_{\alpha}(uau^*) =  \varphi_{\alpha}(a). 
$$
\end{definition}

\begin{remark} \rm
As shown in \cite{nagisawatatani4},  $\varphi_{\alpha}$ is monotone and sequentially normal in the sense that for any increasing  sequence 
$(a_n)_n$ in $\M^+$, if $ a_n \nearrow a \in \M^+$ in the strong operator topology, then $\varphi_{\alpha}(a_n)  \nearrow \varphi_{\alpha}(a) $. 

For  $a \in \M^+$, 
$$
 \varphi_{\alpha}(a) := \int_0^{\infty}  \alpha(\tau(e_{(s,\infty)}(a)))ds  
 =  \int_0^{\infty}  \alpha(\tau(e_{[s,\infty)}(a)))ds .
$$
For example,  if $a$ has a finite spectrum and 
$a = \sum_{i=1}^{n} a_ip_i$ is the spectral decomposition with $a_1 \geq a_2 \geq \dots \geq a_n > 0$, then 
$$
\varphi_{\alpha}(a) =  \sum_{i=1} ^{n-1} (a_i- a_{i+1})\alpha(\tau(p_1 + \dots + p_i)) +
 a_n \alpha(\tau(p_1 + \dots + p_n)). 
$$
\end{remark}
We can express non-linear traces of the Choquet type using the ``generalized $t$-th eigenvalues''. 
\begin{proposition} {\rm (}\cite[Proposition 2.24]{nagisawatatani4}{\rm )}
\label{prop:Stieltjes}
Let $\M$ be a factor of type ${\rm II}_1$ 
with a faithful normal finite trace $\tau$ with $\tau(I) = 1$, and 
let $\alpha: [0,1]  \rightarrow [0, 1]$ be a 
monotone increasing left continuous function with $\alpha(0) = 0$ and $\alpha(I) = 1$ .
Then the non-linear trace $\varphi_{\alpha}$ is also described as follows: Let $\nu_{\alpha}$ be the Lebesgue-Stieltjes measure 
associated with $\alpha$ such that $\nu_{\alpha}([a,b)) = \alpha(b) - \alpha(a)$ .
For any $a \in \M^+$ we have that 
$$
\varphi_{\alpha}(a) 
 := \int_0^{\infty}  \alpha(\tau(e_{(s,\infty)}(a)))ds = \int_0^1 \lambda_t(a) d\nu_{\alpha}(t). 
$$
\end{proposition}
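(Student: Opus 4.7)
The first equality is the definition of $\varphi_{\alpha}(a)$, so the substance is the second equality, and my plan is to derive it by a layer–cake/Fubini argument that turns the ``horizontal'' integral over $s$ into the ``vertical'' integral over $t$. Throughout, write $F(s) := \tau(e_{(s,\infty)}(a))$ for $s \ge 0$; since $\M$ is a finite factor and $a \in \M^+$, this is a non-increasing, right-continuous function of $s$ with $F(s) \in [0,1]$ and $F(s) = 0$ for $s > \|a\|$, which justifies the finiteness of all integrals below.

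The first key step is to represent $\alpha$ as a distribution function of $\nu_\alpha$. Because $\alpha$ is left continuous with $\alpha(0) = 0$ and $\nu_\alpha([0,u)) = \alpha(u) - \alpha(0) = \alpha(u)$ for every $u \in [0,1]$, I can write
\begin{equation*}
\alpha(F(s)) \;=\; \nu_\alpha\bigl([0,F(s))\bigr) \;=\; \int_{[0,1]} \mathbf{1}_{\{t < F(s)\}}\, d\nu_\alpha(t).
\end{equation*}
The second key step is the duality relating $F$ and $\lambda_\cdot(a)$: from the definition $\lambda_t(a) = \inf\{s \ge 0 : F(s) \le t\}$ together with the right continuity and monotonicity of $F$, one obtains the basic equivalence
\begin{equation*}
\lambda_t(a) > s \iff F(s) > t,
\end{equation*}
valid for all $s \ge 0$ and $t \in [0,1)$. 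This is a standard pseudo-inverse identity for monotone right-continuous functions and should be verified directly from the definition.

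Combining these two observations and applying Tonelli's theorem (everything is non-negative), I get
\begin{align*}
\int_0^\infty \alpha(F(s))\, ds
 &= \int_0^\infty \!\!\int_{[0,1]} \mathbf{1}_{\{t < F(s)\}}\, d\nu_\alpha(t)\, ds \\
 &= \int_{[0,1]} \!\!\int_0^\infty \mathbf{1}_{\{s < \lambda_t(a)\}}\, ds\, d\nu_\alpha(t)
  = \int_0^1 \lambda_t(a)\, d\nu_\alpha(t),
\end{align*}
which is the desired identity. Note that the upper limit $t=1$ causes no issue: since $\alpha(1)=1$ and $\alpha$ is left continuous, $\nu_\alpha(\{1\}) = 0$, so $\int_{[0,1]} = \int_0^1$.

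I do not expect any real obstacle here; the argument is essentially a Fubini identity once the two ingredients (the layer-cake representation of $\alpha$ and the pseudo-inverse duality between $F$ and $\lambda_\cdot(a)$) are in place. The one small point to handle carefully is left vs.\ right continuity: $\alpha$ is left continuous while $F$ is right continuous, which is exactly why the equivalence involves strict inequalities on both sides and why $\alpha(F(s))$ equals $\nu_\alpha([0,F(s)))$ rather than $\nu_\alpha([0,F(s)])$. Keeping the inequalities strict ensures the two representations match after the Fubini swap.
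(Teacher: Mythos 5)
Your argument is correct. Note that this paper does not contain a proof of the proposition at all: it is imported by citation from \cite[Proposition 2.24]{nagisawatatani4}, so there is no in-paper argument to compare yours against; judged on its own, your layer-cake/Fubini derivation is a complete and standard proof. The two ingredients are exactly right: $\alpha(u)=\nu_\alpha([0,u))$ uses the left continuity of $\alpha$ together with the convention $\nu_\alpha([a,b))=\alpha(b)-\alpha(a)$, and the equivalence $\lambda_t(a)>s \Leftrightarrow F(s)>t$ with $F(s)=\tau(e_{(s,\infty)}(a))$ follows from monotonicity plus right continuity of $F$ (right continuity coming from normality of $\tau$ and $e_{(s,\infty)}(a)\nearrow e_{(s_0,\infty)}(a)$ as $s\searrow s_0$), which you correctly single out as the point to verify; Tonelli then applies since the integrand is non-negative and jointly measurable (both $F$ and $\alpha$ are monotone). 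Two small remarks. First, your justification that $\nu_\alpha(\{1\})=0$ is really a normalization convention rather than a consequence of left continuity: the defining identity only determines $\nu_\alpha$ on Borel subsets of $[0,1)$, where it already has total mass $\alpha(1)-\alpha(0)=1$, and one sets the mass at $\{1\}$ to zero so that $\nu_\alpha$ is a probability measure (as the paper's subsequent remark assumes). In any case the endpoint is harmless for a second reason: since $\tau(I)=1$ one has $F(s)\le 1$ for all $s$, hence $\lambda_1(a)=0$ for every $a\in\M^+$, so a putative atom at $t=1$ would contribute nothing to $\int\lambda_t(a)\,d\nu_\alpha(t)$ anyway. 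With these observations your proof stands as written.
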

\begin{remark} \rm
Since  $\M$ be a factor of type ${\rm II}_1$,  $\varphi_{\alpha}$ is operator norm continuous on $\M^+$.  In fact, 
for each $t \in [0,1]$, $a \mapsto  \lambda_t(a)$ is operator norm continuous and 
$|| \lambda_t(a) ||  \leq ||a||$. 
Assume that $\|a_n - a\| \rightarrow 0$ 
for $a_n, a \in \M^+$. Since  $\nu_{\alpha}$ is a probability measure, bounded convergence theorem implies 
that $\varphi_{\alpha}(a_n) = 
\int_0^1 \lambda_t(a_n) d\nu_{\alpha}(t)$ converges to 
$\int_0^1 \lambda_t(a) d\nu_{\alpha}(t) = \varphi_{\alpha}(a)$.  
\end{remark}

\begin{example} \rm
Let $\alpha(x) = x$; then we can recover the trace $\tau$,  and for any $a \in \M^+$, 
$$
\varphi_{\alpha}(a) 
 = \int_0^{\infty}  \tau(e_{(s,\infty)}(a))ds = \int_0^{\infty} \lambda_t(a) dt =\tau(a). 
$$
\end{example}

\begin{example} \rm
Let  $$
\alpha(x) 
= \begin{cases}
1,   & (0 < x) \\
0,    & (x = 0); 
\end{cases}
$$
then we can recover the operator norm and for any $a \in \M^+$,  we have 
$$
\varphi_{\alpha}(a) 
 = \int_0^{\infty}  \alpha(\tau(e_{(s,\infty)}(a)))ds = \int_0^{1} \lambda_t(a) d\nu_{\alpha}(t) = ||a||.
$$
\end{example}

\begin{example} \rm
Fix $t > 0$ and 
let  $$
\alpha(x) 
= \begin{cases}
x,    & (0 \leq x \leq t ) \\
t,    & (t < x ); 
\end{cases}
$$
then we obtain a continuous analog of  the Ky Fan norm and for any $a \in \M^+$,  we have 
$$
\varphi_{\alpha}(a) 
 = \int_0^{\infty}  \alpha(\tau(e_{(s,\infty)}(a)))ds = \int_0^{t} \lambda_t(a) dt.   
$$
\end{example}

\begin{remark} \rm 
We can characterize non-linear traces of the Choquet type on a finite factor as in \cite[Theorem 2.29
]{nagisawatatani4}.  
Let $\M$ be a factor of type ${\rm II}_1$ with a faithful normal finite trace with $\tau(1) = 1$. 
Let $\varphi : \M^+ \rightarrow  [0, \infty)$ be a non-linear 
positive map.  Then the following conditions are equivalent:
\begin{enumerate}
\item[$(1)$]  $\varphi$ is a non-linear trace $\varphi = \varphi_{\alpha}$  of the Choquet type associated with  
a monotone increasing and left continuous function $\alpha: [0, 1] \rightarrow [0, 1]$  with $\alpha(0) = 0$ and 
$\alpha(1) = 1$
\item[$(2)$] $\varphi $ is monotonic increasing additive on the spectrum, unitarily invariant, monotone, and 
positively homogeneous.  Moreover $\varphi(I) = 1$.  
\end{enumerate}
\end{remark}

The following fact is important to study the law of large numbers for non-linear traces of the Choquet type on finite factors $\M$.
\begin{theorem} \cite[Remark 2.32
]{nagisawatatani4}
\label{thm:triangle inequality}
Let $\M$ be a factor of type ${\rm II}_1$ with a faithful normal finite trace with $\tau(1) = 1$. 
Let $\varphi_{\alpha}$  be a non-linear trace of the Choquet type associated with  
a monotone increasing and continuous function $\alpha: [0, 1] \rightarrow [0, 1]$  with $\alpha(0) = 0$ and 
$\alpha(1) = 1$.  Assume that $p \geq 1$.  Put $|||a|||_{\alpha,p}(a) := \varphi_\alpha(|a|^p)^{1/p} $. Then the following conditions are equivalent:
\begin{enumerate}
  \item[(a)]  $\alpha$ is concave.
  \item[(b)]  $|||a+b|||_{\alpha, p} \le |||a|||_{\alpha, p} + |||b|||_{\alpha, p}$  for any  $a, b\in \M$ 
\end{enumerate}
\end{theorem}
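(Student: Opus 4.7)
The plan is to recast $|||\cdot|||_{\alpha,p}$ via Proposition~\ref{prop:Stieltjes} as a weighted $L^p$-integral of the generalized singular value function, and then combine Hardy-Littlewood-Polya submajorization with Minkowski's inequality for the forward direction, and reduce to a maximal abelian subalgebra of $\M$ for the converse.

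\emph{For $(a)\Rightarrow(b)$:} Continuity and concavity of $\alpha$ with $\alpha(0)=0$ imply that $\alpha$ is absolutely continuous on $[0,1]$ with non-increasing density $h:=\alpha'\ge 0$ satisfying $\int_0^1 h\,dt=1$. By Proposition~\ref{prop:Stieltjes} together with $\mu_t(|a|^p)=\mu_t(a)^p$ (property (9) of the generalized singular values), $|||a|||_{\alpha,p}^{\,p}=\int_0^1\mu_t(a)^p h(t)\,dt$. The Fack-Kosaki submajorization~\cite{F-K} $\int_0^s\mu_t(a+b)\,dt\le\int_0^s(\mu_t(a)+\mu_t(b))\,dt$, lifted through the convex non-decreasing function $x\mapsto x^p$ for $p\ge 1$, yields $\int_0^s\mu_t(a+b)^p\,dt\le\int_0^s(\mu_t(a)+\mu_t(b))^p\,dt$ for every $s\in[0,1]$. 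Since $h$ is non-increasing, the layer-cake representation $h(t)=\int_0^\infty\chi_{[0,\phi(r))}(t)\,dr$ (with $\phi(r)=|\{h>r\}|$) combined with Fubini upgrades this to $\int_0^1\mu_t(a+b)^p h\,dt\le\int_0^1(\mu_t(a)+\mu_t(b))^p h\,dt$, and Minkowski's inequality in $L^p([0,1],h(t)dt)$ then produces $|||a+b|||_{\alpha,p}\le|||a|||_{\alpha,p}+|||b|||_{\alpha,p}$.

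\emph{For $(b)\Rightarrow(a)$:} Since $\M$ is type II$_1$, it contains a maximal abelian subalgebra $A\cong L^\infty([0,1])$ with $\tau|_A$ corresponding to Lebesgue measure. On $A^+$, $\mu_t(f)$ coincides with the classical decreasing rearrangement $f^*(t)$, and $|||f|||_{\alpha,p}^{\,p}=\int_0^1 f^*(t)^p\,d\nu_\alpha(t)$. For overlapping characteristic functions $f=\chi_A,g=\chi_B$ with $|A|=|B|=s+t$ and $|A\cap B|=s$, the $p=1$ case of (b) reduces to $\alpha(s)+\alpha(s+2t)\le 2\alpha(s+t)$, the midpoint concavity of $\alpha$, which together with continuity forces concavity. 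For $p>1$ the projection test is strictly weaker, and one must supplement it with non-projection test pairs---prototypically $f(t)=t,g(t)=1-t$ on $[0,1]$ (so $f^*=g^*=1-t$ and $(f+g)^*\equiv 1$) and their rescaled analogs localized on any subinterval where $\alpha'$ fails to be non-increasing. Since (b) restricted to $A^+$ is the subadditivity of the classical weighted Lorentz quasi-norm $(\int(\cdot)^{*p}\,d\nu_\alpha)^{1/p}$, classical Lorentz-space arguments force the density of $\nu_\alpha$ to be non-increasing, equivalently $\alpha$ concave.

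\emph{Main obstacle.} The forward direction is a routine chain of standard estimates. The converse is subtler: the key failure is that the Hardy-Littlewood-Polya inequality $\int(f+g)^{*p}\,d\nu_\alpha\le\int(f^*+g^*)^p\,d\nu_\alpha$ requires $d\nu_\alpha$ to have a non-increasing density, and when this fails one must construct explicit test pairs $(f,g)$ localized to the subinterval on which $\alpha'$ is not non-increasing. For $p>1$ the violation cannot be exhibited with characteristic functions alone, and the construction genuinely needs non-projection test elements.
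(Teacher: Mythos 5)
The paper itself gives no proof of this theorem (it is quoted from \cite[Remark 2.32]{nagisawatatani4}), so your argument has to stand on its own. Your direction (a)$\Rightarrow$(b) does: a continuous concave $\alpha$ with $\alpha(0)=0$ is indeed absolutely continuous with non-increasing density $h$, the identification $|||a|||_{\alpha,p}^p=\int_0^1\mu_t(a)^p h(t)\,dt$ follows from Proposition \ref{prop:Stieltjes} and property (9), and the chain Fack--Kosaki submajorization $\rightarrow$ preservation of weak submajorization under the convex increasing map $x\mapsto x^p$ $\rightarrow$ Hardy's lemma for a non-increasing weight $\rightarrow$ Minkowski in $L^p(h\,dt)$ is standard and correct. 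Your converse for $p=1$ via overlapping projections is also complete (it gives exactly midpoint concavity).

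The genuine gap is the converse for a fixed $p>1$. You correctly note that projections only yield $(2^p-1)\alpha(s)+\alpha(s+2t)\le 2^p\alpha(s+t)$, but you never actually produce the stronger test elements: the prototype $f(t)=t$, $g(t)=1-t$ only gives $1\le 2^p\int_0^1(1-t)^p d\nu_\alpha(t)$, the ``rescaled analogs localized where $\alpha'$ fails to be non-increasing'' are not constructed, and the appeal to ``classical Lorentz-space arguments'' invokes a theorem stated for weights $w\,dt$, whereas a non-concave continuous $\alpha$ may produce a $\nu_\alpha$ with a nontrivial singular part, so that citation does not directly cover your situation. The missing step can be closed elementarily inside your abelian subalgebra $L^\infty[0,1]$: fix $s\ge 0$, $h>0$ with $s+2h\le 1$ and $\epsilon>0$; let $a$ equal $1+\epsilon$ on $[0,s+h)$, $1-\epsilon$ on $[s+h,s+2h)$, $0$ elsewhere, and let $b$ be obtained from $a$ by swapping the blocks $[s,s+h)$ and $[s+h,s+2h)$. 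Then $a,b$ are equimeasurable, $\tfrac12(a+b)$ equals $1+\epsilon$ on $[0,s)$ and $1$ on $[s,s+2h)$ and is already decreasing, so (b) and positive homogeneity give $(1+\epsilon)^p\nu_\alpha([0,s))+\nu_\alpha([s,s+2h))\le(1+\epsilon)^p\nu_\alpha([0,s+h))+(1-\epsilon)^p\nu_\alpha([s+h,s+2h))$; dividing by $\epsilon$ and letting $\epsilon\to0$ yields $\nu_\alpha([s+h,s+2h))\le\nu_\alpha([s,s+h))$, i.e. $\alpha(s)+\alpha(s+2h)\le 2\alpha(s+h)$, and midpoint concavity plus continuity gives (a) for every $p\ge 1$, singular part or not. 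Without some such explicit construction (or a precise reference valid for general $\nu_\alpha$), your (b)$\Rightarrow$(a) for $p>1$ is an assertion, not a proof.
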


Finally, we remark that we have Chebyshev's inequality for non-linear traces of the Choquet type.  
\begin{proposition}
Let $\M$ be a factor of type ${\rm II}_1$ with a faithful normal finite trace with $\tau(1) = 1$. 
Let $\varphi_{\alpha}$  be a non-linear trace of the Choquet type associated with  
a monotone increasing and continuous function $\alpha: [0, 1] \rightarrow [0, 1]$  with $\alpha(0) = 0$ and 
$\alpha(1) = 1$. Let $a \in \M$ be self-adjoint and $f \in C(\sigma (a))$ be monotone increasing positive function.  
Take a real number $k \in \sigma (a)$ such that $f(k) >0$. Then 
$$
\varphi_{\alpha}(e_{[k,\infty)}(a)) \leq \frac{\varphi_{\alpha}(f(a))}{f(k)}. 
$$
\end{proposition}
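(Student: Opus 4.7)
The plan is to mimic the classical proof of Chebyshev's (Markov's) inequality via a pointwise/operator domination plus monotonicity and positive homogeneity of $\varphi_{\alpha}$.

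First I would establish the operator inequality
$$
f(a) \;\ge\; f(k)\, e_{[k,\infty)}(a)
$$
by spectral calculus. Using the spectral decomposition $a=\int_{\sigma(a)} \lambda\, de_{\lambda}(a)$, the difference $f(a)-f(k)\,e_{[k,\infty)}(a)$ equals $\int_{\sigma(a)} \bigl(f(\lambda)-f(k)\,\chi_{[k,\infty)}(\lambda)\bigr)\,de_{\lambda}(a)$. On $\sigma(a)\cap[k,\infty)$ the integrand equals $f(\lambda)-f(k)\ge 0$, since $f$ is monotone increasing; on $\sigma(a)\cap(-\infty,k)$ it equals $f(\lambda)\ge 0$, since $f$ is assumed positive. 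Hence the operator $f(a)-f(k)\,e_{[k,\infty)}(a)$ is positive.

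Next I would apply the monotonicity and positive homogeneity of $\varphi_{\alpha}$ (both of which are built into the definition of non-linear traces of the Choquet type). Note that $f(a)$ and $f(k)\,e_{[k,\infty)}(a)$ both lie in $\M^+$, so $\varphi_{\alpha}$ is defined on them. Monotonicity gives
$$
\varphi_{\alpha}(f(a)) \;\ge\; \varphi_{\alpha}\bigl(f(k)\,e_{[k,\infty)}(a)\bigr),
$$
and positive homogeneity (with $f(k)\ge 0$) gives
$$
\varphi_{\alpha}\bigl(f(k)\,e_{[k,\infty)}(a)\bigr) \;=\; f(k)\,\varphi_{\alpha}\bigl(e_{[k,\infty)}(a)\bigr).
$$
Dividing by $f(k)>0$ yields the desired inequality.

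There is essentially no obstacle: the proof requires only the two basic properties (monotonicity and positive homogeneity) already recorded for $\varphi_{\alpha}$, together with the elementary scalar fact $f(\lambda)\ge f(k)\,\chi_{[k,\infty)}(\lambda)$ for $\lambda\in\sigma(a)$, which is then promoted to an operator inequality by functional calculus. The hypothesis $k\in\sigma(a)$ is used only to ensure $f(k)$ is defined; if $f$ is replaced by a monotone increasing positive extension to $\R$ the same argument works for any real $k$ with $f(k)>0$. Neither the concavity nor the continuity assumptions on $\alpha$ are needed for this proposition, although they are available from the standing hypothesis.
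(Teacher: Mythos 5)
Your proof is correct and follows essentially the same route as the paper: both establish the operator inequality $f(a) \ge f(k)\,e_{[k,\infty)}(a)$ (the paper via $f(a) = f(a)e_{[k,\infty)}(a) + f(a)e_{[k,\infty)}(a)^{\perp} \ge f(a)e_{[k,\infty)}(a) \ge f(k)e_{[k,\infty)}(a)$, you via the spectral integral, using monotonicity and positivity of $f$ in the same way) and then conclude with monotonicity and positive homogeneity of $\varphi_{\alpha}$.
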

\begin{proof} Since 
$$
f(a) = f(a)(e_{[k,\infty)}(a) + e_{[k,\infty)}(a)^{\perp}) \geq f(a)e_{[k,\infty)}(a) \geq f(k)e_{[k,\infty)}(a), 
$$
and $\varphi_{\alpha}$ is monotone, we have that 
$$
\varphi_{\alpha}(f(a)) \geq \varphi_{\alpha}(f(k)e_{[k,\infty)}(a)) = f(k) \varphi_{\alpha}(e_{[k,\infty)}(a)). 
$$
This shows the conclusion.

\end{proof}

\section{Three extensions of non-linear traces of the Choquet type to the self-adjoint operators}

The definition of non-linear traces of the Choquet type in Section 2 is restricted to the positive operators.  In this section, 
we introduce three extensions of non-linear traces of the Choquet type to the self-adjoint operators, that is, symmetric 
extension, anti-symmetric extension, and translatable extension, which are analogs of 
symmetric, anti-symmetric, and translatable Choquet integrals.  First, we recall the definitions of these extensions of Choquet integrals.

Let $X$ be a set and ${\mathcal B}$ a $\sigma$-field on $X$.  
Let  $\mu: {\mathcal B} \rightarrow [0, \infty]$ be a monotone measure on $X$.
Let $f$ be a real valued measurable function on $X$.  Put $f_+ := \frac{1}{2} (|f| +f)$ and 
$f_- := \frac{1}{2} (|f| - f)$.  Then $f = f_+ - f_-$. 

\begin{definition} \rm (symmetric Choquet integral)
The symmetric Choquet integral of $f$ is defined by 
$$
{\rm (C_s)} \int f d\mu := {\rm (C)} \int f_+ d\mu - {\rm (C)} \int f_- d\mu,  
$$
if not both terms 
on the right-hand side are infinite. 
\end{definition}

\begin{definition} \rm (anti-symmetric Choquet integral)
Assume that $\mu(X) < +\infty$. Define the dual monotone measure $\overline{\mu}$  by 
$$
\overline{\mu}(B) :=\mu(X) - \mu(B^c)   \ \ \ (B \in \mathcal B) .
$$

The anti-symmetric Choquet integral of $f$ is defined by 
$$
{\rm (C_a)} \int f d\mu := {\rm (C)} \int f_+ d\mu - {\rm (C)} \int f_- d\overline{\mu}.
$$
\end{definition}

\begin{definition} \rm (translatable Choquet integral)
Assume that $\mu(X) < +\infty$. Suppose that there exists a real constant $b$ such that 
$f(x) \geq b$ for all $x \in X$. 
Then the translatable Choquet integral of $f$ is defined by 
$$
{\rm (C_t)} \int f d\mu := {\rm (C)} \int (f - b) d\mu + b\mu(X) .
$$
\end{definition}

This definition does not depend on the choice of such $b$. We have translatability, that is,  for any real number $c$, we have that 
$$
{\rm (C_t)} \int (f + c) d\mu := {\rm (C_t)} \int f d\mu + c\mu(X).
$$
Moreover, translatable Choquet integral is equal to anti-symmetric Choquet integral.

Now we shall consider the analog of these facts to three extensions of non-linear traces of the Choquet type.  
Let $\M$ be a factor of type ${\rm II}_1$ with a faithful normal finite trace $\tau$ with $\tau(I) = 1$, and 
let $\alpha: [0,1]  \rightarrow [0, 1]$ be a 
monotone increasing continuous function with $\alpha(0) = 0$ and $\alpha(1) = 1$ .
Let ${\M}_{s.a.}$ be the set of all self-adjoint operators.  For $a \in {\M}_{s.a.}$, put 
$a_+ := \frac{1}{2} (|a| +a) \in \M_+$ and $a_- := \frac{1}{2} (|a| -a) \in \M_+$.  
Then $a = a_+ - a_-$ and $a_+ a_- = 0$. 

\begin{definition} \rm (symmetric non-linear trace of the Choquet type)
The symmetric non-linear trace of the Choquet type is defined by 
$$
\varphi_{\alpha}^{s}(a) := \varphi_{\alpha}(a_+) - \varphi_{\alpha}(a_-) .
$$
\end{definition}

\begin{definition} \rm (anti-symmetric non-linear trace of the Choquet type)
Define the dual weight $\overline{\alpha}: [0,1]  \rightarrow [0, \infty)$ of $\alpha$ by 
$$
\overline{\alpha}(t) = \alpha(1) -\alpha(1-t) = 1 - \alpha(1-t)  \quad (t \in  [0,1] ).
$$
Then $\overline{\alpha}$ is a monotone increasing continuous function with 
$\overline{\alpha}(0) = 0$, $\overline{\alpha}(1) = 1$ and $\overline{\overline{\alpha}} = \alpha$.  
Then the anti-symmetric non-linear trace of the Choquet type is defined by 
$$
\varphi_{\alpha}^{a}(a) := \varphi_{\alpha}(a_+) - \varphi_{\overline{\alpha}}(a_-).
$$
\end{definition}

\begin{example} 
\begin{enumerate}    
\item If $\alpha(t) = t$, then $\overline{\alpha}= \alpha$ and 
$\varphi_{\alpha}^{s} = \varphi_{\alpha}^{a} = \tau$ is the usual trace. 
\item If $\alpha(t) = \sqrt{t}$, then $\overline{\alpha}(t) = 1 -  \sqrt{1-t}$ and 
$\varphi_{\alpha}^{s} \not= \varphi_{\alpha}^{a}$ in general.  
\end{enumerate}
\end{example}

\begin{definition} \rm (translatable non-linear trace of the Choquet type)
Take a constant real number $c$ such that $a + cI \geq 0$.  
Then translatable non-linear trace of the Choquet type is defined by 
$$
\varphi_{\alpha}^{t}(a) := \varphi_{\alpha}(a + cI) - c.
$$
This definition does not depend on the choice of such $c$.  
In fact, for a positive constant $k$, 
$$
\varphi_{\alpha}(kI) = \int_0^1 \lambda_t(kI) d\nu_{\alpha}(t) =\int_0^1 k d\nu_{\alpha}(t) = k(\alpha(1)- \alpha(0)) =k. 
$$
If $a \geq 0$, then for any constant number $c \geq 0$, we have that 
$$
\varphi_{\alpha}(a + cI ) = \int_0^1 \lambda_t(a + cI) d\nu_{\alpha}(t) 
=\int_0^1  (\lambda_t(a )+ c) d\nu_{\alpha}(t) = \varphi_{\alpha}(a) + c.
$$
For general $a \in M_{s.a.}$, take real numbers $c_1 \geq c_2$ such that 
$a + c_1I \geq 0$ and $a + c_2I \geq 0$, then 
$$
a + c_1I = a + c_2I + (c_1 -c_2)I.  
$$
Therefore 
$$
 \varphi_{\alpha}(a + c_2I) - c_2 = \varphi_{\alpha}(a + c_2I + (c_1 -c_2)I) - (c_1 -c_2) - c_2
  = \varphi_{\alpha}(a + c_1I) - c_1 . 
$$
\end{definition}

Although the symmetric non-linear trace $\varphi_{\alpha}^{s}$ of the Choquet type is symmetric in the sense that 
$$
\varphi_{\alpha}^{s}(-a) = - \varphi_{\alpha}^{s}(a),  \quad (a \in  \M_{s.a.}), 
$$
but it loses translatability, which is an important property we need.  On the other hand, 
the translatable non-linear trace $\varphi_{\alpha}^{t}$ of the Choquet type has many useful properties as shown in the below. Therefore, we usually prefer 
the translatable non-linear trace rather than the symmetric non-linear trace. 

\begin{proposition} 
\label{prop:tranlatable}
Let $\M$ be a factor of type ${\rm II}_1$ with a normalized trace $\tau$.  
Let $\alpha: [0,1]  \rightarrow [0, 1]$ be a 
monotone increasing continuous function with $\alpha(0) = 0$ and $\alpha(1) = 1$ 
Then the translatable non-linear trace $\varphi_{\alpha}^{t}$ of the Choquet type on $\M_{s.a.}$ satisfies the following:
\begin{enumerate}
\item[(1)] {\rm (positively homogeneity)} $\varphi_{\alpha}^{t}(ka) = k \varphi_{\alpha}^{t}(a)$ for any 
$a \in  {\M}_{s.a.}$ and any scalar $k \geq 0$. \\
\item[(2)] {\rm (translatability)}  For any $a \in  {\M}_{s.a.}$ and any real number $c$ 
$$
\varphi_{\alpha}^{t}(a + cI) = \varphi_{\alpha}^{t}(a) + c.
$$
\item[(3)] {\rm (monotonicity)} For $a, b \in {\M}_{s.a.}$, if $a \leq b$, then $\varphi_{\alpha}^{t}(a) \leq 
\varphi_{\alpha}^{t}(b). $\\
\item[(4)] {\rm (unitary invariance)} For any $a \in  \M_{s.a.}$ and any unitary $u \in \M$, we have that 
$\varphi_{\alpha}^{t}(uau^*) =  \varphi_{\alpha}^{t}(a)$. \\
\item[(5)] {\rm (triangle inequality)} Suppose that $\alpha$ is concave on $[0,1]$. Then 
for any $a, b \in {\M}_{s.a.}$, 
$$
 \varphi_{\alpha}^{t}(a + b) \leq  \varphi_{\alpha}^{t}(a) + \varphi_{\alpha}^{t}(b).
$$

\item[(6)] {\rm (operator norm continuity)} $\varphi_{\alpha}^{t}$ is operator norm continuous. 
\end{enumerate}
\end{proposition}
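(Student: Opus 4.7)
The plan is to reduce each property to the corresponding property of $\varphi_\alpha$ on $\M^+$, via the defining formula $\varphi_\alpha^{t}(a) = \varphi_\alpha(a + cI) - c$ for any $c \in \R$ with $a + cI \ge 0$; such a $c$ always exists (take $c = \|a\|$) and the independence from the choice of $c$ has already been established inside the definition. I would also repeatedly use the identity $\varphi_\alpha(x + dI) = \varphi_\alpha(x) + d$ for $x \in \M^+$ and $d \ge 0$, which is noted in the definition and follows from $\lambda_t(x + dI) = \lambda_t(x) + d$.

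For \emph{positive homogeneity} (1), given $k \ge 0$ and $c$ with $a + cI \ge 0$, I would write $\varphi_\alpha^{t}(ka) = \varphi_\alpha(k(a+cI)) - kc = k\varphi_\alpha(a + cI) - kc = k\varphi_\alpha^{t}(a)$, using positive homogeneity of $\varphi_\alpha$ on $\M^+$. For \emph{translatability} (2), pick $c'$ with $a + cI + c'I \ge 0$; then $a + (c + c')I \ge 0$ as well, and the identity $\varphi_\alpha^{t}(a+cI) = \varphi_\alpha(a + (c+c')I) - c' = [\varphi_\alpha(a+(c+c')I) - (c+c')] + c = \varphi_\alpha^{t}(a) + c$ follows. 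For \emph{monotonicity} (3), if $a \le b$ and $a + cI \ge 0$, then $0 \le a + cI \le b + cI$, and monotonicity of $\varphi_\alpha$ on $\M^+$ gives the inequality after subtracting $c$. For \emph{unitary invariance} (4), one just notes $u(a + cI)u^* = uau^* + cI \ge 0$ and invokes unitary invariance of $\varphi_\alpha$.

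For the \emph{triangle inequality} (5), which is the only place using concavity of $\alpha$, I would choose $c_1, c_2 \in \R$ with $a + c_1I, b + c_2I \in \M^+$, so that $(a+b) + (c_1+c_2)I \in \M^+$. Applying Theorem~\ref{thm:triangle inequality} with $p = 1$ to the positive operators $a + c_1I$ and $b + c_2I$ (noting that $|||x|||_{\alpha, 1} = \varphi_\alpha(x)$ when $x \ge 0$) gives
$$
\varphi_\alpha\bigl((a+b) + (c_1+c_2)I\bigr) \le \varphi_\alpha(a + c_1I) + \varphi_\alpha(b + c_2I),
$$
and subtracting $c_1 + c_2$ from both sides yields (5). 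For \emph{operator norm continuity} (6), given $a_n \to a$ in $\M_{s.a.}$, choose $c \ge \sup_n \|a_n\|$ so that all $a_n + cI$ and $a + cI$ are in $\M^+$; since $\|(a_n + cI) - (a + cI)\| = \|a_n - a\| \to 0$, the operator norm continuity of $\varphi_\alpha$ on $\M^+$ (remark after Proposition~\ref{prop:Stieltjes}) gives $\varphi_\alpha^{t}(a_n) \to \varphi_\alpha^{t}(a)$.

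The step requiring the most care is (5), since it depends on the concavity hypothesis and on correctly matching the two translation constants before invoking the triangle inequality for $\varphi_\alpha$ on $\M^+$. The remaining five properties are essentially bookkeeping, once the well-definedness of $\varphi_\alpha^{t}$ (independence from the translation constant) is taken for granted.
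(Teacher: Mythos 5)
Your proof is correct and follows essentially the same route as the paper: each property is reduced, via a translation by a suitable constant, to the corresponding property (positive homogeneity, monotonicity, unitary invariance, subadditivity from Theorem~\ref{thm:triangle inequality}, norm continuity) of $\varphi_\alpha$ on $\M^+$. The only differences are cosmetic — in (2) you invoke well-definedness directly, and in (5) you use two translation constants instead of a common one — neither of which changes the argument.
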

\begin{proof}
(1) Suppose that $a \geq 0$. Then for any scalar $k \geq 0$, we have that
$$
\varphi_{\alpha}(ka) = \int_0^1 \lambda_t(ka) d\nu_{\alpha}(t) 
=\int_0^1 k \lambda_t(a) d\nu_{\alpha}(t) = k \varphi_{\alpha}^{t}(a).
$$
Next suppose that $a \in  {\M}_{s.a.}$. Take a real number $c$ such that $a \geq cI$.  Since $ka \geq kcI$, 
$$
\varphi_{\alpha}^{t}(ka) =\varphi_{\alpha}(ka -kcI) + kc = k\varphi_{\alpha}(a -cI) + kc = k\varphi_{\alpha}^{t}(a).
$$

(2) Take a real number $k$ such that $a + cI \geq kI$, $a \geq kI$ and $c \geq k$.  Then
\begin{align*}
\varphi_{\alpha}^{t}(a + cI) & = \varphi_{\alpha}(a + cI -kI) + kI 
        =  \varphi_{\alpha}^t(a)  + (c-k) + k \\
     & = \varphi_{\alpha}^t(a)  + c.
\end{align*}

(3) For $a, b \in {\M}_{s.a.}$, suppose that $a \leq b$.  Take a real number $k$ such that $kI \leq a \leq b$.  
Since $0 \leq a -kI \leq b -kI$ and $\varphi_{\alpha}$ is monotone on $\M_+$, we have that 
$$
 \varphi_{\alpha}^t(a) = \varphi_{\alpha}(a -kI) + k \leq \varphi_{\alpha}(b -kI) + k = \varphi_{\alpha}^t(b).  
$$

(4) Take a constant real number $c$ such that $a + cI \geq 0$.  Then for any unitary $u \in  {\M}$, 
$u(a + cI)u^* = uau^* + cI \geq 0$.  Since $\varphi_{\alpha}$ is unitarily invariant, 
$$
\varphi_{\alpha}^{t}(uau^*) = \varphi_{\alpha}(u(a + cI)u^*) - c 
=  \varphi_{\alpha}^{t}(a + cI) -c = \varphi_{\alpha}^t(a).   
 $$

(5) Suppose that $\alpha$ is concave on $[0,1]$.  By Theorem \ref{thm:triangle inequality}, 
if  $a$ and $b$ are in $\in {\M}_{+}$, we have that 
$ \varphi_{\alpha}(a + b) \leq  \varphi_{\alpha}(a) + \varphi_{\alpha}(b). $  
Now suppose that $a$ and $b$ are  in ${\M}_{s.a.}$.  Take a real number $c$ such that $a \geq cI$ and $b \geq cI$.  
Then  $a + b \geq 2cI$ and 
\begin{align*}
\varphi_{\alpha}^{t}(a + b) & = \varphi_{\alpha}(a + b -2cI) + 2cI = \varphi_{\alpha}(a -cI + b-cI) + 2cI \\
 &\leq  \varphi_{\alpha}(a - cI)  + \varphi_{\alpha}(b - cI) +2c = \varphi_{\alpha}^{t}(a) + \varphi_{\alpha}^{t}(b). 
\end{align*}
 
(6) Assume that $\|a_n - a\| \rightarrow 0$ for $a_n, a \in {\M}_{s.a.}$. 
Then there exists a real number $k$ such that $a_n \geq kI$ and $a \geq kI$ for any $n$. Then 
\begin{align*}
\varphi_{\alpha}^{t}(a_n) - \varphi_{\alpha}^{t}(a) & =  \varphi_{\alpha}(a_n -kI) + k - (\varphi_{\alpha}(a -kI) +k)  \\
 & = \varphi_{\alpha}(a_n -kI) - \varphi_{\alpha}(a -kI) 
\end{align*}
converges to $0$, since $a_n - kI \geq 0$,  $a - kI \geq 0$ and  $\varphi_{\alpha}$ is operator norm continuous on $\M_+$. 
\end{proof}

The anti-symmetric non-linear trace $\varphi_{\alpha}^{a}$ behaves well for $x \mapsto -x$.

\begin{proposition} \label{prop:anti-symmetric}
Let $\M$ be a factor of type ${\rm II}_1$ with a normalized trace $\tau$.  
Let $\alpha: [0,1]  \rightarrow [0, 1]$ be a 
monotone increasing continuous function with $\alpha(0) = 0$ and $\alpha(1) = 1$ 
Then the anti-symmetric non-linear trace $\varphi_{\alpha}^{a}$ of the Choquet type satisfies the following:
For $x \in  \M_{s.a.}$, 
\[  \varphi_{\alpha}^{a}(-x) = - \varphi_{\overline{\alpha}}^{a}(x).  \]
\end{proposition}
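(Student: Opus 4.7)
The plan is to unfold the definition of the anti-symmetric non-linear trace and exploit two elementary facts: the swap of positive and negative parts under negation, together with the involution identity $\overline{\overline{\alpha}} = \alpha$ which is built into the definition of the dual weight.

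First, I observe that for any $x \in \M_{s.a.}$ one has $|-x| = |x|$, so
\[ (-x)_+ = \tfrac{1}{2}(|x| - x) = x_-, \qquad (-x)_- = \tfrac{1}{2}(|x| + x) = x_+. \]
This is the only structural input from the operator side; it requires no spectral theory beyond the definitions of $x_+$ and $x_-$.

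Next, I substitute these into the defining formula for $\varphi_{\alpha}^{a}$ to obtain
\[ \varphi_{\alpha}^{a}(-x) = \varphi_{\alpha}((-x)_+) - \varphi_{\overline{\alpha}}((-x)_-) = \varphi_{\alpha}(x_-) - \varphi_{\overline{\alpha}}(x_+). \]
On the other hand, applying the definition of $\varphi_{\overline{\alpha}}^{a}$ to $x$ and negating gives
\[ -\varphi_{\overline{\alpha}}^{a}(x) = -\bigl(\varphi_{\overline{\alpha}}(x_+) - \varphi_{\overline{\overline{\alpha}}}(x_-)\bigr) = \varphi_{\overline{\overline{\alpha}}}(x_-) - \varphi_{\overline{\alpha}}(x_+). \]
Finally, I invoke $\overline{\overline{\alpha}} = \alpha$ (noted explicitly in the definition of the anti-symmetric trace) to identify the right-hand side with $\varphi_{\alpha}(x_-) - \varphi_{\overline{\alpha}}(x_+)$, matching the expression obtained for $\varphi_{\alpha}^{a}(-x)$.

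There is essentially no obstacle: the proof is a one-line calculation once the swap $(-x)_+ = x_-$, $(-x)_- = x_+$ is recorded. The statement is really a formal consequence of the fact that the anti-symmetric construction uses $\alpha$ on the positive part and $\overline{\alpha}$ on the negative part, so negation interchanges the two roles while conjugating the weight, and the second conjugation is undone by $\overline{\overline{\alpha}} = \alpha$.
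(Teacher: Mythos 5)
Your proof is correct and follows essentially the same route as the paper: negation swaps the positive and negative parts, $\varphi_{\alpha}^{a}(-x)=\varphi_{\alpha}(x_-)-\varphi_{\overline{\alpha}}(x_+)$, and the involution $\overline{\overline{\alpha}}=\alpha$ identifies this with $-\varphi_{\overline{\alpha}}^{a}(x)$. No gaps.
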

\begin{proof} 
Since  $x \in \M_{s.a.}$ has the decompositions such that $x = x_+ -  x_-$ and 
$-x = x_- -  x_+$ with $x_+  x_- = 0$, we have that 
$$
\varphi_{\alpha}^{a}(-x) = \varphi_{\alpha}(x_-) - \varphi_{\overline{\alpha}}(x_+),  
$$
and 
$$
\varphi_{\overline{\alpha}}^a(x)
=\varphi_{\overline{\alpha}}(x_+) - \varphi_{\overline{\overline{\alpha}}}(x_-)
=\varphi_{\overline{\alpha}}(x_+) - \varphi_{\alpha}(x_-) .
$$
Therefore  $\varphi_{\alpha}^{a}(-x) = - \varphi_{\overline{\alpha}}^{a}(x)$. 
\end{proof}

If $p \in \M_{s.a.} $ is a projection, then $-p \geq -I$. Therefore 
$$
 \varphi_{\alpha}^t(-p) =  \varphi_{\alpha}(-p + I) +(-1) = \alpha(\tau(I-p)) -1 
 = \alpha(1 - \tau(p)) -1 = -\overline{\alpha}(\tau(p)).
$$
On the other hand, by definition, we have that 
$$
\varphi_{\alpha}^{a}(-p) = -\varphi_{\overline{\alpha}}(p)= -\overline{\alpha}(\tau(p)).
$$
Thus  $\varphi_{\alpha}^t(-p)$ coincides with $\varphi_{\alpha}^{a}(-p)$. 

More generally, this holds for any $a \in  \M_{s.a.}$.  

\begin{proposition} 
\label{prop:anti-symmetric = translatable}
Let $\M$ be a factor of type ${\rm II}_1$ with a normalized trace $\tau$.  
Let $\alpha: [0,1]  \rightarrow [0, 1]$ be a 
monotone increasing continuous function with $\alpha(0) = 0$ and $\alpha(1) = 1$ 
Then for any $a \in \M_{s.a.}$, we have that 
$$
\varphi_{\alpha}^{t}(a)  = \varphi_{\alpha}^{a}(a)
$$
\end{proposition}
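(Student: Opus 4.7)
The plan is to express both $\varphi_\alpha^t(a)$ and $\varphi_\alpha^a(a)$ as Lebesgue integrals over $\mathbb R$ of $\alpha(\tau(e_{(u,\infty)}(a)))$ (with a $1-\alpha$ correction for $u<0$) and then observe that the two formulas coincide. The spectral identities I will lean on, all immediate from the spectral theorem, are $e_{(s,\infty)}(a+cI) = e_{(s-c,\infty)}(a)$, and, for $s>0$, $e_{(s,\infty)}(a_+) = e_{(s,\infty)}(a)$ together with $e_{(s,\infty)}(a_-) = e_{(-\infty,-s)}(a)$.

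For the translatable side, take $c\geq\|a\|$ so that $a+cI\geq 0$ and write $\varphi_\alpha^t(a) = \varphi_\alpha(a+cI) - c$. Applying the shift identity and the change of variable $u=s-c$, splitting the integral at $u=0$ and rewriting $-c = -\int_{-c}^0 du$, I obtain
$$\varphi_\alpha^t(a) = \int_0^\infty \alpha(\tau(e_{(u,\infty)}(a)))\,du - \int_{-c}^0 [1-\alpha(\tau(e_{(u,\infty)}(a)))]\,du.$$
For $u<-\|a\|$ the projection $e_{(u,\infty)}(a)$ equals $I$ and $\alpha(1)=1$, so the second integrand vanishes there and the lower limit can be extended to $-\infty$.

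For the anti-symmetric side, the identity $e_{(s,\infty)}(a_+) = e_{(s,\infty)}(a)$ gives $\varphi_\alpha(a_+) = \int_0^\infty \alpha(\tau(e_{(s,\infty)}(a)))\,ds$. For $a_-$, combining $e_{(s,\infty)}(a_-) = e_{(-\infty,-s)}(a)$, the relation $\overline\alpha(t) = 1-\alpha(1-t)$, and $\tau(e_{(-\infty,-s)}(a)) = 1-\tau(e_{[-s,\infty)}(a))$, followed by the change of variable $u=-s$, I get
$$\varphi_{\overline\alpha}(a_-) = \int_{-\infty}^0 [1-\alpha(\tau(e_{[u,\infty)}(a)))]\,du.$$

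Comparing the two expressions, $\varphi_\alpha^t(a)$ and $\varphi_\alpha^a(a)$ differ only by $\tau(e_{(u,\infty)}(a))$ versus $\tau(e_{[u,\infty)}(a))$ inside the negative-side integral. These two traces differ by $\tau(e_{\{u\}}(a))$, which is nonzero for at most countably many $u$ (the atoms of the spectral distribution, countable since their masses sum to at most $\tau(I)=1$); hence the two integrands agree Lebesgue-almost everywhere and the integrals coincide. The main obstacle is simply the careful bookkeeping of the spectral shift and the sign change $u=-s$; the open-versus-closed endpoint issue at the end is rendered harmless by the countable-atom observation.
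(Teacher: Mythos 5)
Your proof is correct, and it takes a genuinely different route from the paper's. The paper argues in two steps: first it verifies the identity for $a=-b$ with $b\ge 0$ by a distribution-function computation using $\overline{\alpha}(t)=1-\alpha(1-t)$, and then, for general $a$, it invokes the fact that $\varphi_{\alpha}$ is monotonic increasing additive on the spectrum, applied to $x=a+\|a_-\|I$ and the comonotone pair $f_1(x)=a_+$, $f_2(x)=\|a_-\|-a_-$, to reassemble $\varphi_{\alpha}(a+\|a_-\|I)=\varphi_{\alpha}(a_+)+\varphi_{\alpha}(\|a_-\|-a_-)$. You instead bypass the additivity property entirely: you compute $\varphi_{\alpha}(a+cI)$, $\varphi_{\alpha}(a_+)$ and $\varphi_{\overline{\alpha}}(a_-)$ directly as layer-cake integrals over $\mathbb{R}$ of $\alpha(\tau(e_{(u,\infty)}(a)))$ (with the $1-\alpha$ correction on the negative half-line) and match the integrands. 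All your spectral identities ($e_{(s,\infty)}(a+cI)=e_{(s-c,\infty)}(a)$, $e_{(s,\infty)}(a_+)=e_{(s,\infty)}(a)$ and $e_{(s,\infty)}(a_-)=e_{(-\infty,-s)}(a)$ for $s>0$) are correct, and the only delicate point, the open-versus-closed endpoint $e_{(u,\infty)}$ versus $e_{[u,\infty)}$, is correctly disposed of by the countable-atoms argument; note the paper needs the same identification and handles it by citing its earlier remark that $\varphi_{\alpha}$ may equally be computed with $e_{[s,\infty)}$. What your approach buys is self-containedness: it uses nothing beyond the definition of $\varphi_{\alpha}$ and elementary measure theory, at the cost of somewhat heavier bookkeeping; the paper's proof is shorter at the general-$a$ stage because it leans on the structural comonotone-additivity property already established for traces of the Choquet type.
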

\begin{proof}
Firstly, consider the case that $a = -b$ ($b \geq 0$)
\begin{align*}
\varphi_{\alpha}^{a}(a) & = \varphi_{\alpha}^{a}(0 - b) = - \varphi_{\overline{\alpha}}(b)\\
& = - \int_0^{\infty}  \overline{\alpha}(\tau(e_{(s,\infty)}(b)))ds \\
& = - \int_0^{\infty} 1 - \alpha(1-\tau(e_{(s,\infty)}(b)))  ds \\
& = - \int_0^{\|b\|} 1 - \alpha(\tau(e_{[0,s]}(b)))  ds \\
& = - \|b\| + \int_0^{\|b\|} \alpha(\tau(e_{[0,s]}(b)))  ds \\
& = - \|b\| + \int_0^{\infty} \alpha(\tau(e_{[s,\infty]}(-b + \|b\|)))  ds \\
& = - \|b\| +  \varphi_{\alpha}(-b + \|b\|) = \varphi_{\alpha}^{t}(a) .
\end{align*}
Secondly, consider a general $a \in \M_{s.a.}$.  We have that 
$$
a = a_+ - a_-  \quad (a_+, a_- \in \M_+, a_+ a_- = 0).  
$$
Put $ x := a + \|a_-\|  =  a_+  + (-a_- + \|a_-\|)\geq 0.$ 
Consider two functions on $[0,\infty)$ such that   
$$
f_1(t) :=
\begin{cases}  0, & (0 \leq t \leq \|a_- \|), \\
 t -  \| a_- \|, & (\| a_- \| \leq t ). 
\end{cases}
$$
$$
f_2(t) :=
\begin{cases}  t, & (0 \leq t \leq \| a_- \|), \\
\| a_- \|, & (\| a_- \| \leq t ). 
\end{cases}
$$
Then $f_1$ and $f_2$ are monotone increasing continuous functions and $f_1(t) + f_2(t) = t$.  
Therefore $f_1(x) = a_+$ and $f_2(x) = -a_- + \|a_-\|$ by functional calculus. 
Since $\varphi_{\alpha}$ is monotonic increasing additive on the spectrum 
to get that 
$$
\varphi_{\alpha}(x) = \varphi_{\alpha}(f_1(x)) + \varphi_{\alpha}(f_2(x)).
$$
Hence we have that
\begin{align*}
\varphi_{\alpha}^{a}(a) & = \varphi_{\alpha}^{a}(a_+ - a_-) = \varphi_{\alpha}(a_+)  - \varphi_{\overline{\alpha}}(a_-)\\
& = \varphi_{\alpha}(a_+) - \|a_-\| + \varphi_{\alpha}(-a_- + \|a_-\|)\\
& = \varphi_{\alpha}(a_+)  + \varphi_{\alpha}(-a_- + \|a_-\|) - \|a_-\|\\
& = \varphi_{\alpha}(a_+  + (-a_- + \|a_-\|) ) - \|a_-\| \\
& = \varphi_{\alpha}(a + \|a_-\|) - \|a_-\|  =  \varphi_{\alpha}^{t}(a). 
\end{align*}
\end{proof}

\begin{proposition}
$|\varphi_{\alpha}^{t}(a)| \leq \|a\|$  for any $a \in  \M_{s.a.}$.
\end{proposition}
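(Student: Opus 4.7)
The plan is to combine the monotonicity and translatability properties of $\varphi_\alpha^t$ established in Proposition \ref{prop:tranlatable} with the trivial operator sandwich $-\|a\|\,I \leq a \leq \|a\|\,I$, which holds for every self-adjoint $a \in \M$.

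First, I would compute $\varphi_\alpha^t$ on scalar multiples of the identity. By positive homogeneity (Proposition \ref{prop:tranlatable}(1)), $\varphi_\alpha^t(0) = \varphi_\alpha^t(0 \cdot I) = 0 \cdot \varphi_\alpha^t(I) = 0$. Then by translatability (Proposition \ref{prop:tranlatable}(2)), for any real $c$,
\[
\varphi_\alpha^t(cI) \;=\; \varphi_\alpha^t(0 + cI) \;=\; \varphi_\alpha^t(0) + c \;=\; c.
\]
(Alternatively, one can observe directly from the definition that, since $cI \geq cI$ when $c \geq 0$ and we may write $cI = 0 + cI$ for all $c$, we have $\varphi_\alpha^t(cI) = \varphi_\alpha(cI - cI) + c = c$; both routes agree.)

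Next, setting $r := \|a\|$, the operator inequality $-rI \leq a \leq rI$ and the monotonicity of $\varphi_\alpha^t$ (Proposition \ref{prop:tranlatable}(3)) yield
\[
-r \;=\; \varphi_\alpha^t(-rI) \;\leq\; \varphi_\alpha^t(a) \;\leq\; \varphi_\alpha^t(rI) \;=\; r,
\]
which is exactly $|\varphi_\alpha^t(a)| \leq \|a\|$.

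There is no real obstacle here; the statement is essentially a one-line corollary of the already-established properties of $\varphi_\alpha^t$. The only subtlety to mention, if any, is making sure that the evaluation $\varphi_\alpha^t(cI) = c$ is justified for both positive and negative $c$, which is why I first pin down $\varphi_\alpha^t(0) = 0$ via positive homogeneity before invoking translatability.
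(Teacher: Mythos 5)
Your proof is correct, and for the lower bound it takes a different (and slightly more economical) route than the paper. The upper bound is the same in both: $a \leq \|a\|I$, monotonicity, and $\varphi_{\alpha}^{t}(\|a\|I)=\|a\|$. For the lower bound, however, the paper passes through the dual weight, writing $-\varphi_{\alpha}^{t}(a)=\varphi_{\overline{\alpha}}^{t}(-a)\leq\|a\|$, which relies on Proposition \ref{prop:anti-symmetric} together with the identification of the translatable and anti-symmetric extensions in Proposition \ref{prop:anti-symmetric = translatable}. You instead sandwich $-\|a\|I\leq a\leq\|a\|I$ and evaluate $\varphi_{\alpha}^{t}(cI)=c$ for \emph{all} real $c$ via $\varphi_{\alpha}^{t}(0)=0$ and translatability, so that monotonicity gives both bounds at once; this uses only items (1)--(3) of Proposition \ref{prop:tranlatable} and never mentions $\overline{\alpha}$. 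Your care in justifying $\varphi_{\alpha}^{t}(cI)=c$ for negative $c$ is exactly the right point to flag, and your argument is complete as written; the paper's version has the mild advantage of exercising the duality relation it needs elsewhere, but yours is the more self-contained derivation.
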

\begin{proof}
Since $a \leq \|a\|I$, 
$$
\varphi_{\alpha}^{t}(a)\leq  \varphi_{\alpha}^{t}(\|a\|I) = \|a\|\varphi_{\alpha}^{t}(I) =  \|a\|. 
$$
Then
$$
- \varphi_{\alpha}^{t}(a) = \varphi_{\overline{\alpha}}^{t}(-a) \leq \|-a\| = \|a\|. 
$$
Hence $|\varphi_{\alpha}^{t}(a)| \leq \|a\|$. 
\end{proof}

\begin{proposition} \label{superadditivity}
Let $\M$ be a factor of type ${\rm II}_1$ with a normalized trace $\tau$.  
Let $\alpha: [0,1]  \rightarrow [0, 1]$ be a 
monotone increasing continuous function with $\alpha(0) = 0$ and $\alpha(1) = 1$ 
\begin{enumerate}
\item[$(1)$] 
If $\alpha$ is concave on $[0,1]$, then $\overline{\alpha}$ is convex on $[0,1]$. 
If $\alpha$ is convex on $[0,1]$, then $\overline{\alpha}$ is concave on $[0,1]$. 
\item[$(2)$] 
If $\alpha$ is convex on $[0,1]$, then for any $a, b \in {\M}_{s.a.}$, 
\[  
\varphi_{\alpha}^{t}(a + b) \geq \varphi_{\alpha}^{t}(a) + \varphi_{\alpha}^{t}(b).
 \]
\item[$(3)$] 
If $\alpha$ is concave on $[0,1]$, then  for any $a \in {\M}_{s.a.}$,
\[ 
\varphi_{\overline{\alpha}}^{t}(a)  \leq \varphi_{\alpha}^{t}(a).
\]
If $\alpha$ is convex on $[0,1]$, then  for any $a \in {\M}_{s.a.}$,
\[
\varphi_{\alpha}^{t}(a) \leq \varphi_{\overline{\alpha}}^{t}(a). 
\]
\end{enumerate}
\end{proposition}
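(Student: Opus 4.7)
The plan is to handle (1) by direct calculation, then use (1) together with a sign-flip duality identity to reduce (2) and (3) to results already established in the paper.

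For part (1), I would compute directly from $\overline{\alpha}(t) = 1 - \alpha(1-t)$. For $s,t \in [0,1]$ and $\lambda \in [0,1]$,
$$\overline{\alpha}(\lambda s + (1-\lambda)t) = 1 - \alpha(\lambda(1-s) + (1-\lambda)(1-t)),$$
so the concavity (resp.\ convexity) inequality for $\alpha$ applied on the right, after negating and adding $1$, turns into the convexity (resp.\ concavity) inequality for $\overline{\alpha}$.

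The key ingredient for (2) and (3) is the duality identity
$$\varphi_{\alpha}^{t}(-x) = -\,\varphi_{\overline{\alpha}}^{t}(x), \qquad x \in \M_{s.a.},$$
which I would obtain by combining Proposition~\ref{prop:anti-symmetric = translatable} (which identifies $\varphi_{\alpha}^{t}$ with $\varphi_{\alpha}^{a}$) with Proposition~\ref{prop:anti-symmetric} (which gives $\varphi_{\alpha}^{a}(-x) = -\varphi_{\overline{\alpha}}^{a}(x)$). Granted this, part (2) is immediate: assuming $\alpha$ convex, part (1) makes $\overline{\alpha}$ concave, so Proposition~\ref{prop:tranlatable}(5) applied to $\overline{\alpha}$ at $-a, -b$ gives
$$\varphi_{\overline{\alpha}}^{t}(-a-b) \leq \varphi_{\overline{\alpha}}^{t}(-a) + \varphi_{\overline{\alpha}}^{t}(-b).$$
Applying the duality identity to each term (using $\overline{\overline{\alpha}} = \alpha$) and negating converts this into the desired super-additivity.

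For part (3), I would use a chord comparison: if $\alpha$ is concave with $\alpha(0)=0$ and $\alpha(1)=1$, its graph lies above the diagonal, so $\alpha(t) \geq t$ on $[0,1]$; consequently $\overline{\alpha}(t) = 1 - \alpha(1-t) \leq 1 - (1-t) = t \leq \alpha(t)$. Substituting this pointwise inequality into the defining integral
$$\varphi_{\beta}(b) = \int_0^{\infty} \beta(\tau(e_{(s,\infty)}(b)))\, ds$$
for $b \in \M_+$ gives $\varphi_{\overline{\alpha}}(b) \leq \varphi_{\alpha}(b)$; translating by $cI$ (with $c$ chosen so $a + cI \geq 0$) extends the inequality to all self-adjoints, yielding $\varphi_{\overline{\alpha}}^{t}(a) \leq \varphi_{\alpha}^{t}(a)$. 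The convex case follows by swapping the roles of $\alpha$ and $\overline{\alpha}$ via $\overline{\overline{\alpha}} = \alpha$. The only genuinely delicate step is the bookkeeping around the duality identity in (2); once that identity is in place, everything reduces to the concave-case triangle inequality already in Proposition~\ref{prop:tranlatable} and the pointwise monotonicity of $\varphi_\alpha$ in $\alpha$.
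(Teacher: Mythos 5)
Your proposal is correct and follows essentially the same route as the paper's proof: part (2) via the duality identity $\varphi_{\alpha}^{t}(-x)=-\varphi_{\overline{\alpha}}^{t}(x)$ (from Propositions \ref{prop:anti-symmetric} and \ref{prop:anti-symmetric = translatable}) combined with the concave-case triangle inequality of Proposition \ref{prop:tranlatable}(5), and part (3) via the pointwise bound $\overline{\alpha}\leq\alpha$ integrated against the spectral data. The only (harmless) difference is cosmetic: you get $\overline{\alpha}(t)\leq t\leq\alpha(t)$ from the chord comparison, whereas the paper uses subadditivity of the concave $\alpha$ to get $\alpha(t)+\alpha(1-t)\geq 1$, and your explicit translation step for self-adjoint $a$ in (3) is in fact slightly more careful than the paper's wording.
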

\begin{proof}
$(1)$  
It is trivial. \\
$(2)$   Suppose that  $\alpha$ is covex.  Then $\overline{\alpha}$ is concave and 
for any $a, b \in {\M}_{s.a.}$, 
by a triangle inequality in Proposition \ref{prop:tranlatable}, we have that 
$$
\varphi_{\overline{\alpha}}^{t}((-a) +(-b))
 \leq \varphi_{\overline{\alpha}}^{t}(-a) + \varphi_{\overline{\alpha}}^{t}(-b).
$$
For $x \in  \M_{s.a.}$, 
$\varphi_{\overline{\alpha}}^{a}(-x) = - \varphi_{\alpha}^{a}(x) $ 
and  $\varphi_{\alpha}^{a}(x) = \varphi_{\alpha}^{t}(x)$.  
Hence
$$
- \varphi_{\alpha}^{t}(a + b) \leq  -\varphi_{\alpha}^{t}(a)  -\varphi_{\alpha}^{t}(b).
$$
Therefore 
$$
 \varphi_{\alpha}^{t}(a + b) \geq \varphi_{\alpha}^{t}(a) + \varphi_{\alpha}^{t}(b).
$$
$(3)$  Suppose that $\alpha$ is concave on $[0,1]$.  Then for any $s,t \in [0,1]$, 
$$
\alpha(s + t) \leq \alpha(s) + \alpha(t).
$$
Therefore for any $t \in [0,1]$, 
\begin{align*}
\alpha(t) - \overline{\alpha}(t) & = \alpha(t)  - (1 - \alpha(1-t)) = \alpha(t) + \alpha(1-t) -1 \\
& \geq  \alpha(t + (1-t)) -1 = \alpha(1) -1 = 0.
\end{align*}
Thus  
$$
\overline{\alpha}(t) \leq  \alpha(t) \text{ for any }  t \in [0,1]. 
$$
Then for $x \in  \M_{s.a.}$, 
$$
\varphi_{\alpha}^{t}(a)= \int_0^{\infty}  \alpha(\tau(e_{(s,\infty)}(a)))ds  \geq 
\int_0^{\infty} {\overline \alpha}(\tau(e_{(s,\infty)}(a)))ds = 
\varphi_{\overline{\alpha}}^{t}(a), 
$$
since any projection $p \in \M$ satisfies that $0 \leq \tau(p) \leq 1$. \\
The case that $\alpha$ is convex on $[0,1]$ is similarly proved.
\end{proof}

\noindent
{\bf Caution.} To simplify notation, in the bellow, we use the same symbol $\varphi_{\alpha}(a)$, $(a \in {\M}_{s.a.})$ for 
the translatable extension $\varphi_{\alpha}^{t}(a)$. Moreover, we define the extension
$\varphi_{\alpha}: \M \rightarrow \Bbb C$ by 
$$
\varphi_{\alpha}(x) = \varphi_{\alpha}(a) + i \varphi_{\alpha}(b)
$$
for $x \in \M$ with $x = a + ib, (a,b \in {\M}_{s.a.})$.  And we also call $\varphi_{\alpha}$ the non-linear trace of the 
Choquet type associated with $\alpha$. 

\begin{remark} 
The statements of this section hold for matrix algebras $M_n(\C)$ by similar proofs.
\end{remark}

\section{Law of large numbers for non-linear traces of the Choquet type}

Since the non-linear trace $\varphi_{\alpha}$ of the Choquet type associated with $\alpha$ is not additive, 
averages do not converge in general.  Hence, we study the range of their accumulation points, that is, 
we estimate their limit supremum and limit infimum. Thus 
the law of large numbers of a sequence $(a_n)_n$ in $\M_{s.a.}$ for  $\varphi_{\alpha}$ is 
generally described by the estimation of 
\[  \limsup _{n \to \infty} \varphi_{\alpha}((\frac{a_1 + \dots + a_n}{n})^k) \text{ and }  
     \liminf _{n \to \infty} \varphi_{\alpha}((\frac{a_1 + \dots + a_n}{n})^k), \]
for any fixed $k =1,2,3, \dots$. 

We start with a typical example to investigate the law of large numbers for non-linear traces $\varphi_{\alpha}$ of the Choquet type. 

\begin{example} \rm \label{example:UHF}
Let $A = \otimes_{n \in \N}M_2(\C)$ be the UHF algebra with the unique trace 
$\tau=\otimes_{n\in \N} {\rm tr}_2$.  We denote by $\M$ the AFD factor of type ${\rm II}_1$ 
with the normalized trace $\tau$, which is constructed by the weak operator topology closure of 
the image of GNS representation of $A$ by $\tau$. 
We regard that $A \subset \M$.  
Define $\alpha: [0,1]  \rightarrow [0, 1]$ by $\alpha (t) = {\sqrt t}$, then $\alpha$ is a 
monotone increasing continuous function with $\alpha(0) = 0$ and $\alpha(1) = 1 $. 
Then $\alpha$ is concave.  
Let $\varphi_{\alpha}$ be the non-linear trace of the Choquet type associated with $\alpha$.  
Consider a sequence $(p_n)_n$ of projections in $\M$ defined by 
\[    p_n := \overbrace{I \otimes I \otimes \cdots \otimes I}^{n-1}  \otimes 
    \begin{pmatrix}   1 & 0 \\ 0 & 0  \end{pmatrix}
    \otimes I \otimes \cdots,
\]
where only the $n$-th component is non-trivial. 
We also use the notation
\[   p_n =  \overbrace{I \otimes I \otimes \cdots \otimes I}^{n-1}  \otimes 
    \begin{pmatrix}   1 & 0 \\ 0 & 0  \end{pmatrix}\otimes (\otimes_{n+1}^\infty I).  \]
For any $n \neq m$,  $p_n$ and $p_m$ are unitary equivalent in $\M$. Then $p_np_m = p_mp_n$ and 
$$
\varphi_{\alpha}(p_np_m) = {\alpha}(\tau(p_np_m)) = \sqrt {1/4} 
= {\sqrt {1/2}}{\sqrt {1/2}} = \varphi_{\alpha}(p_n)\varphi_{\alpha}(p_m). 
$$ 
Put $s_n = p_1 + p_2 + \cdots +p_n$. 
We can show that $s_n$ has the following form:
\[  s_n = \sum_{k=0}^n  (n-k)q_n(k),  \quad  (\sum_{k=0}^n q_n(k) = I), \]
where $\{q_n(k) : k=0,1,\ldots,n \}$ is a family of orthogonal projections with
$\tau(q_n(k)) =\displaystyle \frac{1}{2^n}\binom{n}{k}$.
In fact, for any $n$, we define $q_n(k)$ $(k=0,1,\ldots, n)$ as follows:
\[   q_n(k) = (\sum_{\substack{i_1+i_2+\cdots+i_n=k \\ i_j= 0 \text{ or } 1} }  e_{i_1}\otimes e_{i_2}\otimes \cdots \otimes e_{i_n} )
   \otimes (\otimes_{n+1}^\infty I), \]
where $e_0 = \begin{pmatrix}   1 & 0 \\ 0 & 0  \end{pmatrix}$ and $e_1 = \begin{pmatrix}   0 & 0 \\ 0 & 1  \end{pmatrix}$.
In particular, we have
\begin{gather*}
   q_n(0) = e_0\otimes \cdots \otimes e_0 \otimes(\otimes_{n+1}^\infty I)  \\
\intertext{ and }
     q_n(n) = e_1\otimes \cdots \otimes e_1 \otimes(\otimes_{n+1}^\infty I) .  
\end{gather*}
It is clear that $\{q_n(k) : k=0,1,\ldots,n \}$ is a family of orthogonal projections with
$\tau(q_n(k)) =\displaystyle \frac{1}{2^n}\binom{n}{k}$.
We put
\[   r_n(k) = \sum_{\substack{i_1+i_2+\cdots+i_n=k \\ i_j= 0 \text{ or } 1} }  e_{i_1}\otimes e_{i_2}\otimes \cdots \otimes e_{i_n} .\]
Then we have  $q_n(k) = r_n(k) \otimes (\otimes_{n+1}^\infty I)$ and
\begin{align*}
   p_n & = \overbrace{(e_0+e_1)\otimes \cdots \otimes (e_0+e_1)}^{n-1} \otimes e_0 \otimes (\otimes_{n+1}^\infty I)  \\
        & = ( \sum_{k=0}^{n-1} r_{n-1}(k) )\otimes e_0 \otimes (\otimes_{n+1}^\infty I).
\end{align*}
We show the relation
\[  s_n= \sum_{k=0}^n (n-k)q_n(k)  \]
by induction.
It is clear that 
\[   s_1 = p_1 = e_0\otimes (\otimes_2^\infty I) = q_1(0) .  \]
We assume that
\[   s_n = \sum_{k=0}^n (n-k)q_n(k) . \]
Then we have
\begin{align*}
  s_{n+1} & = s_n + p_{n+1}  \\
     & = \sum_{k=0}^n (n-k) r_n(k) \otimes  (\otimes_{n+1}^\infty I) 
           +  ( \sum_{k=0}^n r_n(k) )\otimes e_0 \otimes (\otimes_{n+2}^\infty I)  \\
     & = \sum_{k=0}^n (n-k) r_n(k) \otimes (e_0+e_1)\otimes  (\otimes_{n+2}^\infty I) 
           +  ( \sum_{k=0}^n r_n(k) )\otimes e_0 \otimes (\otimes_{n+2}^\infty I)  \\
     & = \Bigl( nr_n(0)\otimes e_0 + r_n(0) \otimes e_0 \\
        & \qquad  + \sum_{k=0}^{n-1} \bigl( (n-k)r_n(k) \otimes e_1 + (n-k-1)r_n(k+1)\otimes e_0 + r_n(k+1)\otimes e_0 \bigr)  \Bigr) \\
     & \qquad \otimes (\otimes_{n+2}^\infty I) \\
     & = \Bigl( (n+1)r_n(0)\otimes e_0    + \sum_{k=0}^{n-1}  (n-k) (r_n(k) \otimes e_1 + r_n(k+1)\otimes e_0)  \Bigr)  \otimes (\otimes_{n+2}^\infty I)  \\
     & = \Bigl( (n+1)r_{n+1}(0) + \sum_{k=0}^{n-1}  (n-k) r_{n+1}(k+1) ) \Bigr)  \otimes (\otimes_{n+2}^\infty I) \\
     & = \sum_{k=0}^{n+1} (n-k+1)q_{n+1}(k). 
\end{align*}
So this relation holds.

Then, by definition, 
\[   \varphi_{\alpha}(\frac{s_n}{n}) = \frac{1}{n} \sum_{k=0}^{n-1} \sqrt{\frac{1}{2^n} \sum_{r=0}^k  \binom{n}{r} }.  \]
To compute the limit as $n \rightarrow \infty$, we use Proposition \ref{prop:Stieltjes} to get that 
$$
\varphi_{\alpha}(\frac{s_n}{n}) = \int_0^1 \lambda_t(\frac{s_n}{n}) d\nu_{\alpha}(t). 
$$
We identify  $(p_n)_n$ as a sequence $(a_n)_n$ of independent and identically distributed random variables of ``coin toss'' on 
the infinite product space $\prod_{n \in {\Bbb N}} \{0,1\}$.  By the classical strong law of large numbers, 
$\frac{1}{n}(a_1 + a_2 + \dots +a_n)$ converges to a constant function $\frac{1}{2}$ almost everywhere.  
By  Proposition 3.2.11 in \cite{D-P-S},  the generalized singular valued function $\lambda_t(\frac{s_n}{n})$ converges to a constant function $\frac{1}{2}$ for almost all $t \in [0,1]$.  
By the dominated convergence theorem, 
\begin{align*}
  \lim_{n \to \infty} \varphi_{\alpha}(\frac{s_n}{n}) 
  & = \int_0^1  \lim_{n \to \infty} \lambda_t(\frac{s_n}{n})  d\nu_{\alpha}(t) 
   = \int_0^1 \frac{1}{2} d\nu_{\alpha}(t) \\
  & = \frac{1}{2} \leq {\sqrt {\frac{1}{2}}} = \varphi_{\alpha}(p_1). 
\end{align*}
Similarly, for any fixed $k \in \N$, 
$$
\lim_{n \to \infty} \varphi_{\alpha}((\frac{s_n}{n})^k) 
= \int_0^1 (\frac{1}{2})^k d\nu_{\alpha}(t) = (\frac{1}{2})^k 
\leq ({\sqrt {\frac{1}{2}}}\ )^k = (\varphi_{\alpha}(p_1))^k.
$$ 
\end{example}

As in the example above, we need an {\it inequality} to formulate the law of large numbers for non-linear 
traces $\varphi_{\alpha}$ of the Choquet type. 
Since averages do not converge in general, we study the range of their accumulation points, that is, 
we estimate their limit supremum and limit infimum. 

\begin{theorem} \label{psudo law of large numbers}
Let $\M$ be a factor of type ${\rm II}_1$ with a normalized trace $\tau$.  
Let $\alpha: [0,1]  \rightarrow [0, 1]$ be a monotone increasing continuous function with $\alpha(0) = 0$ 
and $\alpha(1) = 1$, and
$\varphi_{\alpha}$ a non-linear trace of the Choquet type associated with $\alpha$.  
Let $(a_n)_n$ be an operator norm bounded sequence $(a_n)_n$ in ${\M}_{s.a.}$. 
Put $s_n = a_1 + a_2 + \dots +a_n$.  
\begin{enumerate}
\item[(1)] Assume that  $\alpha$ is concave. Fix $k \in  \N$. Suppose that there is a constant 
$C(k) \in {\mathbb R}$ such that  
for any mutually different $i_1, i_2, \dots, i_k \in \N$, 
$$
\varphi_{\alpha}({\rm Re} (a_{i_1}a_{i_2} \dots a_{i_k}) ) \leq C(k).  
$$
Then we have that 
\[  \limsup _{n \to \infty}  \varphi_{\alpha}((\frac{s_n}{n})^k) \leq C(k) .\]
\item[(2)] Assume that  $\alpha$ is convex. Fix $k \in  \N$. Suppose that there is a constant 
$\hat{C}(k) \in {\mathbb R}$ such that  
for any mutually different $i_1, i_2, \dots, i_k \in \N$,   
$$
\varphi_{\alpha}({\rm Re} (a_{i_1}a_{i_2} \dots a_{i_k}) ) \geq \hat{C}(k). 
$$
Then we have that 
\[  \liminf _{n \to \infty}  \varphi_{\alpha}((\frac{s_n}{n})^k) \geq \hat{C}(k) .\]
\end{enumerate}
\end{theorem}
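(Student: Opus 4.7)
The plan is to expand $s_n^k$ multinomially, isolate the sub-sum indexed by $k$-tuples with pairwise distinct entries (where the hypothesis applies), bound the off-diagonal remainder by the operator-norm inequality $|\varphi_\alpha(\cdot)| \le \|\cdot\|$, and then combine sub-additivity of $\varphi_\alpha$ (resp.\ super-additivity) with positive homogeneity from Proposition \ref{prop:tranlatable} to collapse the non-linearity onto the uniform constant $C(k)$ (resp.\ $\hat C(k)$).

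Concretely, write $s_n^k = D_n + E_n$, where
$$D_n = \sum_{\substack{(j_1,\dots,j_k) \\ \text{pairwise distinct}}} a_{j_1}a_{j_2}\cdots a_{j_k}, \qquad E_n = \sum_{\text{remaining tuples}} a_{j_1}\cdots a_{j_k}.$$
There are $n(n-1)\cdots(n-k+1) = n^k - O(n^{k-1})$ distinct $k$-tuples, so $E_n$ contains $O(n^{k-1})$ products, each of norm $\le M^k$ with $M := \sup_n \|a_n\| < \infty$. This gives $\|E_n/n^k\| = O(1/n)$. The set of distinct tuples is closed under the reversal $(j_1,\dots,j_k) \mapsto (j_k,\dots,j_1)$, which implements $x \mapsto x^*$ on products of self-adjoint operators; thus $D_n$ is self-adjoint and one can equivalently write $D_n = \sum_{\text{distinct}} \mathrm{Re}(a_{i_1}\cdots a_{i_k})$, so each summand lies in $\M_{s.a.}$.

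For part (1), Theorem \ref{thm:triangle inequality} together with Proposition \ref{prop:tranlatable}(5) provides sub-additivity of the extended $\varphi_\alpha$ on $\M_{s.a.}$. Iterating it with positive homogeneity yields
$$\varphi_\alpha\bigl((s_n/n)^k\bigr) \le \varphi_\alpha(D_n/n^k) + \varphi_\alpha(E_n/n^k) \le \frac{1}{n^k}\sum_{\text{distinct}} \varphi_\alpha\bigl(\mathrm{Re}(a_{i_1}\cdots a_{i_k})\bigr) + O(1/n),$$
where the last term uses $|\varphi_\alpha(E_n/n^k)| \le \|E_n/n^k\|$. Invoking the hypothesis term-by-term bounds the sum by $\frac{n(n-1)\cdots(n-k+1)}{n^k} C(k) + O(1/n)$, which tends to $C(k)$, giving $\limsup_n \varphi_\alpha((s_n/n)^k) \le C(k)$. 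Part (2) follows by the identical template, replacing sub-additivity with super-additivity from Proposition \ref{superadditivity}(2) and estimating $\varphi_\alpha(E_n/n^k) \ge -\|E_n/n^k\| \to 0$.

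The main delicate point is that the uniform pointwise hypothesis on $\mathrm{Re}(a_{i_1}\cdots a_{i_k})$ is invoked simultaneously for roughly $n^k$ summands, so any loss in the sub-additive aggregation across $O(n^k)$ terms would be catastrophic; fortunately the two-term inequality of Proposition \ref{prop:tranlatable}(5) iterates losslessly to finite sums in $\M_{s.a.}$, and positive homogeneity is exact, so the only sacrificed quantity is the off-diagonal term $E_n/n^k$, which is of operator norm $O(1/n)$ purely because $k$ is fixed and the sequence $(a_n)_n$ is bounded.
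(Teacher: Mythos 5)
Your proposal is correct and follows essentially the same route as the paper's proof: split $s_n^k$ into the sum over pairwise-distinct index tuples (rewritten as a sum of $\mathrm{Re}(a_{i_1}\cdots a_{i_k})$, which is where self-adjointness and the hypothesis enter) plus a remainder of $O(n^{k-1})$ terms controlled by $|\varphi_{\alpha}(\cdot)|\leq\|\cdot\|$, then apply iterated sub-additivity (resp.\ super-additivity) with positive homogeneity and pass to the limit. The only cosmetic difference is that the paper treats $k=1$ separately and notes explicitly that the off-diagonal sum is self-adjoint, which in your write-up is immediate since it is the difference of two self-adjoint operators.
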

\begin{proof}
Since  $(a_n)_n$ be an operator norm bounded sequence $(a_n)_n$, there exists a constant $K > 0$ such that 
$||a_n||\leq K$ for all $n \in {\Bbb N}.$ 

(1)  Fix $k \in {\Bbb N}$. When $k = 1$, the assertion is clear by a triangle inequality in Proposition \ref{prop:tranlatable}. Hence we may assume that $k \geq 2.$ 
Since $n$ goes to $\infty$, we may assume that $n \geq k$.
Put 
$$
D_k(n) := \{i = (i_1,i_2,\dots,i_k) \in \{1,2,\dots,n\}^k \ | \  i_1,i_2,\dots,i_k \text{  are all distinct } \} .
$$
The cardinality $|D_k(n)|$ of $D_k(n)$ is $\dfrac{n!}{(n-k)!}$. 
Since the cardinality of $D_k(n)^c$ is $|D_k(n)^c|=n^k-|D_k(n)|$, we have
\[  \lim_{n\to\infty} \frac{|D_k(n)|}{n^k} = \lim_{n\to\infty}\frac{n}{n}\frac{n-1}{n}\cdots\frac{n-k+1}{n} = 1  
\text{ and } \lim_{n\to\infty} \frac{|D_k(n)^c|}{n^k}= 0.  \]

We also have two inequalities as follows:
$$
|| {\rm Re}(a_{i_1}a_{i_2} \dots a_{i_k}) || \leq ||  a_{i_1}a_{i_2} \dots a_{i_k}    || \leq K^k, 
$$
for any $i_1, i_2, \dots, i_k$.  And by assumption, 
$$
\varphi_{\alpha}({\rm Re} (a_{i_1}a_{i_2} \dots a_{i_k})) \leq C(k), 
$$
for any mutually different $i_1, i_2, \dots, i_k$. 

Since $\sum_{i \in D_k(n)} a_{i_1}a_{i_2}\cdots a_{i_k}$ is self-adjoint, so is
$\sum_{i \in D_k(n)^c} a_{i_1}a_{i_2}\cdots a_{i_k}$.
Using a triangle inequality in Proposition \ref{prop:tranlatable} (5), we have that
\begin{align*}
     & \varphi_{\alpha}((\frac{s_n}{n})^k) = \frac{1}{n^k}  \varphi_{\alpha}((a_1 + a_2 + \cdots + a_n)^k)  \\
  = & \frac{1}{n^k}  \varphi_{\alpha}( \sum_{i \in D_k(n)} a_{i_1}a_{i_2} \cdots a_{i_k} 
         +  \sum_{i \in (D_k(n))^c} a_{i_1}a_{i_2} \cdots a_{i_k} )  \\
  \leq &  \frac{1}{n^k}  \varphi_{\alpha}( \sum_{i \in D_k(n)} a_{i_1}a_{i_2} \cdots a_{i_k} )
         +  \frac{1}{n^k} \varphi_{\alpha} (\sum_{i \in (D_k(n))^c} a_{i_1}a_{i_2} \cdots a_{i_k} ).
\end{align*}
Examine each part.  The first part is 
\begin{align*}
& \frac{1}{n^k}  \varphi_{\alpha}( \sum_{i \in D_k(n)} a_{i_1}a_{i_2} \cdots a_{i_k} ) 
= \frac{1}{n^k}  \varphi_{\alpha}( \sum_{i \in D_k(n)} {\rm Re} (a_{i_1}a_{i_2} \cdots a_{i_k}) ) \\
\leq &  \frac{1}{n^k}  ( \sum_{i \in D_k(n)} \varphi_{\alpha}({\rm Re}(a_{i_1}a_{i_2} \cdots a_{i_k}) ))
  \leq  \frac{1}{n^k}  ( \sum_{i \in D_k(n)} C(k) ) \\
= & \frac{|D_k(n)|}{n^k} C(k) \to C(k) \quad (n\to \infty).
\end{align*}
The second part is 
\begin{align*} 
  &  \frac{1}{n^k} \varphi_{\alpha} (\sum_{i \in (D_k(n))^c} a_{i_1}a_{i_2} \cdots a_{i_k} ) 
      \leq \frac{1}{n^k}  ||\sum_{i \in (D_k(n))^c} a_{i_1}a_{i_2} \cdots a_{i_k} || \\
 \leq & \frac{1}{n^k}  \sum_{i \in (D_k(n))^c} ||a_{i_1}a_{i_2} \dots a_{i_k} || 
      \leq  \frac{1}{n^k}  \sum_{i \in (D_k(n))^c} K^k \\
   = &  \frac{|D_k(n)^c|}{n^k} K^k \to 0 \quad (n \to \infty).
\end{align*} 
Therefore 
\[   \limsup _{n \to \infty}  \varphi_{\alpha}((\frac{s_n}{n})^k) 
     \leq \limsup _{n \to \infty} \frac{|D_k(n)|}{n^k}C(k)    
           + \limsup _{n \to \infty} \frac{|D_k(n)^c|}{n^k}K^k = C(k).  
\]

$(2)$  Use the inequality in Proposition \ref{superadditivity} instead of a triangle inequality in Proposition \ref{prop:tranlatable} (5). Then we get the desired assertion in a similar way.  \\
\end{proof}

\begin{example} \rm
Consider the sequence $(p_n)_n$ in Example \ref{example:UHF}.  \\
Let $\alpha(t) ={\sqrt t}$, then  $\overline{\alpha}(t) = 1 - {\sqrt {1-t}} \leq \alpha(t)$, $\alpha$ is concave and $\overline{\alpha}$ 
is convex.  Put $(a_n)_n = (p_n)_n$.  Then for any $i$, 
$$
\varphi_{\alpha}(a_i) = \varphi_{\alpha}(p_i) = \frac{1}{\sqrt 2} =: C(1), 
$$
and 
$$
\varphi_{\overline{\alpha}}(a_i) = \varphi_{\overline{\alpha}}(p_i) = 1- \sqrt {1-\frac{1}{2} }
= 1 - \frac{1}{\sqrt 2} =:{\hat C}(1).
$$
Fix $k \in  \N$. Then for any mutually different $i_1, i_2, \dots, i_k$, 
$$
\varphi_{\alpha}({\rm Re} (a_{i_1}a_{i_2} \dots a_{i_k}) ) = \alpha(\tau(p_{i_1}p_{i_2} \dots p_{i_k}) = 
\alpha(\frac{1}{2^k}) = (\frac{1}{\sqrt 2})^k =: C(k), 
$$
and  
$$
\varphi_{\overline{\alpha}}({\rm Re} (a_{i_1}a_{i_2} \dots a_{i_k}) ) =  
\overline{\alpha}(\frac{1}{2^k}) =1- \sqrt {1-\frac{1}{2^k} }=:{\hat C}(k), 
$$  
Therefore  we have that
\begin{align*}
& 1- \sqrt {1-\frac{1}{2^k} } \leq \liminf _{n \to \infty}  \varphi_{\overline{\alpha}}((\frac{s_n}{n})^k) 
\leq \liminf _{n \to \infty}  \varphi_{\alpha}((\frac{s_n}{n})^k)\\ 
& \leq  \limsup _{n \to \infty}  \varphi_{\alpha}((\frac{s_n}{n})^k) \leq (\frac{1}{\sqrt 2})^k. 
\end{align*}
In fact, in this case we already know that 
\[
\liminf _{n \to \infty}  \varphi_{\alpha}((\frac{s_n}{n})^k) =  \limsup _{n \to \infty}  \varphi_{\alpha}((\frac{s_n}{n})^k)
= \lim  _{n \to \infty}  \varphi_{\alpha}((\frac{s_n}{n})^k) = (\frac{1}{2})^k. 
\]
\end{example}
\begin{example} \rm
Consider the sequence $(p_n)_n$ in Example \ref{example:UHF}. \\
Let $\alpha(t) ={\sqrt t}$.  Put $(a_n)_n = (-p_n)_n$.  Then for any $i$, 
$$
\varphi_{\alpha}(a_i) = \varphi_{\alpha}(-p_i) = - \overline{\alpha}(\frac{1}{2}) = - (1 - \frac{1}{\sqrt 2}) =: C(1), 
$$
and 
$$
\varphi_{\overline{\alpha}}(a_i) = \varphi_{\overline{\alpha}}(-p_i) = - \alpha(\frac{1}{2}) = - \frac{1}{\sqrt 2}
=:{\hat C}(1).
$$
Fix $k \in  \N$. Then 
for any mutually different $i_1, i_2, \dots, i_k$, 
\begin{align*}
& \varphi_{\alpha}({\rm Re} (a_{i_1}a_{i_2} \dots a_{i_k}) ) 
= \varphi_{\alpha}((-1)^k (p_{i_1}p_{i_2} \dots p_{i_k}) ) \\
& = \begin{cases}
- \overline{\alpha}(\frac{1}{2^k}) = - (1 - \sqrt{1-\frac{1}{2^k}}) =: C(k), \ \ &(k \text{ is odd }), \\
 \alpha(\frac{1}{2^k}) = (\frac{1}{\sqrt{2}})^k  =: C(k),  \ \ &(k \text{ is even }). 
\end{cases}
\end{align*}
and
\begin{align*}
& \varphi_{\overline{\alpha}}({\rm Re} (a_{i_1}a_{i_2} \dots a_{i_k}) ) 
= \varphi_{\overline{\alpha}}((-1)^k (p_{i_1}p_{i_2} \dots p_{i_k}) ) \\
& = \begin{cases}
- \alpha(\frac{1}{2^k}) = - ((\frac{1}{\sqrt{2}})^k ) = (- \frac{1}{\sqrt{2}})^k =: \hat{C}(k), \ \ &(k \text{ is odd }), \\
\overline{\alpha}(\frac{1}{2^k}) = 1 - \sqrt{1-\frac{1}{2^k}}  =:\hat{C}(k),  \ \ &(k \text{ is even }). 
\end{cases}
\end{align*}
Therefore, we have that
\begin{align*}
 &\hat{C}(k) \leq \liminf _{n \to \infty}  \varphi_{\overline{\alpha}}((\frac{s_n}{n})^k) 
\leq \liminf _{n \to \infty}  \varphi_{\alpha}((\frac{s_n}{n})^k) \\ 
&\leq  \limsup _{n \to \infty}  \varphi_{\alpha}((\frac{s_n}{n})^k) \leq C(k). 
\end{align*}
\end{example}
\begin{example} \rm
Consider the sequence $(p_n)_n$ in Example \ref{example:UHF}.  \\
Let $\alpha(t) ={\sqrt t}$.  Put $(a_n)_n = (2p_n - I)_n$.  Then for any $i$, 
\begin{align*}
&\varphi_{\alpha}(a_i) = \varphi_{\alpha}(2p_i - I) = \varphi_{\alpha}(p_i - (I-p_i)) \\
&= \alpha(\tau(p_i)) - \overline{\alpha}(\tau(I -p_i)) = \frac{1}{\sqrt 2} - (1-\frac{1}{\sqrt 2} )  = {\sqrt 2} -1 =:C(1), 
\end{align*}
and 
\begin{align*}
&\varphi_{\overline{\alpha}}(a_i) = \varphi_{\overline{\alpha}}(2p_i - I) = \varphi_{\overline{\alpha}}(p_i - (I-p_i)) \\
&= \overline{\alpha}(\tau(p_i)) - {\alpha}(\tau(I-p_i))
 = (1-\frac{1}{\sqrt 2} ) - \frac{1}{\sqrt 2} = 1 -{\sqrt 2}  =:\hat{C}(1), 
\end{align*}
Fix $k \in  \N$. Then for any mutually different $i_1, i_2, \dots, i_k$, 
\[
(2p_{i_1}-I)(2p_{i_2}-I) \dots (2p_{i_k}-I) = 2q - I
\]
for some projection $q \in \M$ with $\tau(q) = \frac{1}{2}$.  
Therefore 
$$
\varphi_{\alpha}({\rm Re} (a_{i_1}a_{i_2} \dots a_{i_k}) ) = \varphi_{\alpha}(2q - I) = {\sqrt 2} -1 =:C(k)
$$
Similarly, we have 
$$
\varphi_{\overline{\alpha}}({\rm Re} (a_{i_1}a_{i_2} \dots a_{i_k}) ) = 1 -{\sqrt 2}  =:\hat{C}(k).
$$
Therefore  we have that
\begin{align*}
& 1- \sqrt {2} \leq \liminf _{n \to \infty}  \varphi_{\overline{\alpha}}((\frac{s_n}{n})^k) 
\leq \liminf _{n \to \infty}  \varphi_{\alpha}((\frac{s_n}{n})^k)\\ 
& \leq  \limsup _{n \to \infty}  \varphi_{\alpha}((\frac{s_n}{n})^k) \leq {\sqrt 2} -1 . 
\end{align*}
In fact, in this case we know that 
\[
\liminf _{n \to \infty}  \varphi_{\alpha}((\frac{s_n}{n})^k) =  \limsup _{n \to \infty}  \varphi_{\alpha}((\frac{s_n}{n})^k)
= \lim  _{n \to \infty}  \varphi_{\alpha}((\frac{s_n}{n})^k) = 0^k = 0, 
\]
since $\tau(2p_i - I) = 0$. 
\end{example}

\begin{example} \rm
Consider the sequence $(p_n)_n$ in Example \ref{example:UHF}.  \\
Let $\beta (t) =t^2$, then  $\overline{\beta}(t) = 1 - (1-t)^2 = 2t -t^2 =:\alpha(t)$ and $\overline{\alpha}(t) = \beta (t)$. 
Moreover $\beta$ is convex, $\overline{\beta}$ is concave and $\beta (t) \leq \overline{\beta}(t)$.  
Put $(a_n)_n = (p_n)_n$.  Then for any $i$, 
$$
\varphi_{\beta}(a_i) = \varphi_{\beta}(p_i) = (\frac{1}{2})^2 = \frac{1}{4} =: \hat{C}(1), 
$$
$$
\varphi_{\overline{\beta}}(a_i) = \varphi_{\overline{\beta}}(p_i) 
= 2(\frac{1}{2}) - (\frac{1}{2})^2 = \frac{3}{4} =:C(1).
$$
Fix $k \in  \N$. Then for any mutually different $i_1, i_2, \dots, i_k$, 
$$
\varphi_{\beta}({\rm Re} (a_{i_1}a_{i_2} \dots a_{i_k}) ) = \beta(\tau(p_{i_1}p_{i_2} \dots p_{i_k}) = 
\beta(\frac{1}{2^k}) = (\frac{1}{4})^k =: \hat{C(}k), 
$$
and  
$$
\varphi_{\overline{\beta}}({\rm Re} (a_{i_1}a_{i_2} \dots a_{i_k}) ) =  
\overline{\beta}(\frac{1}{2^k}) =2(\frac{1}{2^k}) - (\frac{1}{2^k})^2=\frac{1}{2^{k-1}} +  \frac{1}{4^k}=:C(k). 
$$  
Therefore,  we have that
\begin{align*}
 &(\frac{1}{4})^k \leq \liminf _{n \to \infty}  \varphi_{\beta}((\frac{s_n}{n})^k) 
\leq \limsup _{n \to \infty}  \varphi_{\beta}((\frac{s_n}{n})^k)\\ 
& \leq  \limsup _{n \to \infty}  \varphi_{\overline{\beta}}((\frac{s_n}{n})^k) \leq \frac{1}{2^{k-1}} +  \frac{1}{4^k}. 
\end{align*}
In fact, in this case, we know that 
\[
\liminf _{n \to \infty}  \varphi_{\beta}((\frac{s_n}{n})^k) =  \limsup _{n \to \infty}  \varphi_{\beta}((\frac{s_n}{n})^k)
= \lim  _{n \to \infty}  \varphi_{\beta}((\frac{s_n}{n})^k) = (\frac{1}{2})^k. 
\]
\end{example}

Next, we consider sequences of noncommutative self-adjoint operators.  Recall that  in \cite{powers} Powers studied shifts $\sigma$  
on the AFD factor $M$ of type ${\rm II}_1$ such that there is a sequence $(u_n)_{n=0}^\infty$ of self-adjoint unitaries
in $M$ which pairwise either commute or anticommute and $\sigma(u_i) = u_{i+1}$. 

\begin{example} \rm (uniform norm law of large numbers)
Let $A = \otimes_{n \in \N}M_2(\C)$ be the UHF algebra with the unique trace 
$\tau=\otimes_{n\in \N} {\rm tr}_2$.  and  $\M$ the AFD factor of type ${\rm II}_1$ 
with the normalized trace $\tau$.
We study a sequence $(u_n)_{n=0}^\infty$ of self-adjoint unitaries in $A$ satisfying 
\[  u_n=u_n^* = u_n^{-1},  u_iu_j= (-1)^{a(|i-j|)}u_ju_i ,  \]
where $a(n) \in \{0,1\}$ and $a(0)=0$.

First, we consider the case that $a(i)=1$ for any $i\ge 1$, that is, $u_iu_j=-u_ju_i$ for any $i,j$ with $i \not=j$. 
We can construct such a sequence concretely in the following way:
For $ x = \begin{pmatrix} 1 & 0 \\ 0 & -1 \end{pmatrix},  y=\begin{pmatrix}  0 & 1 \\ 1 & 0 \end{pmatrix} \in M_2(\mathbb{C})$,
it follows $x=x^*=x^{-1}$, $y=y^*=y^{-1} \in M_2(\mathbb{C})$ and $xy=-yx$. 
Define a sequence $(u_n)_{n=0}^\infty$ in $A$ as follows:
\begin{align*}
  u_0 & = x\otimes \otimes_2^\infty I_2\\
  u_1 & = y \otimes x \otimes \otimes_3^\infty I_2  \\
  u_2 & = y \otimes y \otimes x \otimes_4^\infty I_2 \\
  u_3 & = y \otimes y \otimes y \otimes x \otimes_5^\infty I_2 \\
       & \cdots \cdots \\
  u_n & = \overbrace{y\otimes y \otimes \cdots \otimes y}^n \otimes x \otimes_{n+2}^\infty I_2 \\
       & \cdots \cdots .
\end{align*}
Then $u_n=u_n^* = u_n^{-1}$ and $u_iu_j= -u_ju_i$ for any $i,j$ with $i\neq j$.

And for all $n$, $\tau(u_n) = 0 =  \varphi_\alpha(u_n)$ for any non-linear trace $\varphi_\alpha$ of the Choquet type. 
These relations imply the following: Put $s_n := u_0+u_1+\cdots +u_{n-1}$. Then $s_n^*= s_n$,  
\[ s_n^*s_n =s_n^2 = (u_0+u_1+\cdots +u_{n-1})^2 = \sum_{i=0}^{n-1}u_i^2 + \sum_{0\le i <j \le n-1}(u_iu_j + u_ju_i) = nI,   \]
and $\|s_n\| = \sqrt{n}$.
So $\frac{1}{\sqrt{n}} s_n$ is a self-adjoint unitary and  we have

\[   \lim_{n\to\infty} \|(\frac{s_n}{n})^k\|= \lim_{n\to \infty} \frac{1}{n^{k/2}} = 0.    \]
\noindent
This shows that a certain strong non-commutativity implies what we call {\it the uniform norm law of large numbers}. 
Moreover, 
\[   \lim_{n\to\infty} \varphi_\alpha((\frac{s_n}{n})^k)=0   \]
by the operator norm continuity of the non-linear trace $\varphi_\alpha$ of the Choquet type.
This means that the limit of the $k$-th moment is $0$.

Suppose that $\alpha$ is concave with  $\alpha(0)=0, \alpha(1)=1$ .
For any mutually different $i_1, i_2, \dots, i_k \in \N$, 
\begin{align*}
{\rm Re} (u_{i_1}u_{i_2} \dots u_{i_k}) 
& = \frac{1}{2}(u_{i_1}u_{i_2} \dots u_{i_k} +  u_{i_k}u_{i_{k-1}} \dots u_{i_1}) \\
& = \frac{1}{2}(u_{i_1}u_{i_2} \dots u_{i_k} +  (-1)^{\frac{k(k-1)}{2}}u_{i_1}u_{i_2} \dots u_{i_k}) 
\end{align*}
is $0$ or $u_{i_1}u_{i_2} \dots u_{i_k}$ itself, so that $u_{i_1}u_{i_2} \dots u_{i_k}$ is a self-adjoint unitary ($\not= I$).  
Hence $\tau(u_{i_1}u_{i_2} \dots u_{i_k})= 0$.  Put $C(k):=  2\alpha(\frac{1}{2})-1$.  Then 
$C(k) \geq 0$, since $\alpha$ is concave with $\alpha(0)=0, \alpha(1)=1$.  Hence  
$$
\varphi_{\alpha}({\rm Re} (u_{i_1}u_{i_2} \dots u_{i_k}) ) \leq 2\alpha(\frac{1}{2})-1 =C(k).  
$$
Thus, we can directly check the statement of Theorem \ref{psudo law of large numbers}  (1):
\[   \limsup_{n\to\infty} \varphi_\alpha((\frac{s_n}{n})^k)= \lim_{n\to\infty} \varphi_\alpha((\frac{s_n}{n})^k) 
= 0 \leq C(k). \]

Nextly, consider $p_n := \frac{1 +u_n}{2}$. Then $(p_n)_n$ is a sequence of projections such that 
$$
\tau(p_i) = \frac{1}{2} \ {\text and} \ p_ip_jp_i = \frac{1}{2}p_i
$$
for any $i,j$ with $i\neq j$. 
Since 
\[   \lim_{n\to\infty} \|\frac{p_0 + p_1 + \dots + p_{n-1}}{n} - \frac{1}{2}  I \| =    
\lim_{n\to\infty} \|\frac{u_0 + u_1 + \dots + u_{n-1}}{2n} \|= 0. \]

We also have that 
\[   \lim_{n\to\infty} \|(\frac{p_0 + p_1 + \dots p_{n-1}}{n})^k - (\frac{1}{2})^kI \|= 0. \]

To study the central limit theorem, we need to consider the limit of  
$$
\frac{s_n}{\sqrt n} := \frac{u_0+u_1+\cdots +u_{n-1}}{{\sqrt n}}
$$ when 
$n \rightarrow \infty$.  We remark that the uniform norm limit of $\frac{s_n}{\sqrt n}$ does not exists but 
$$
\varphi_\alpha( (\frac{s_n}{\sqrt{n}})^k) = \begin{cases}
1, \ \ &  (k \text{ is even }), \\
2\alpha(\frac{1}{2})-1,\ \ &  (k \text{ is odd }).
\end{cases}
$$

In fact, since $(\frac{s_n}{\sqrt{n}} )^2 = I$, $\frac{s_n}{\sqrt{n}}$ is a self-adjoint unitary.  If $k$ is even, then 
$\varphi_\alpha( (\frac{s_n}{\sqrt{n}})^k) = \varphi_\alpha(I) = 1$ .  If $k$ is odd, then 
$$
\varphi_\alpha( (\frac{s_n}{\sqrt{n}})^k) = \varphi_\alpha( (\frac{s_n}{\sqrt{n}})) = 2\alpha(\frac{1}{2})-1. 
$$
Thus for any fixed $k$, $\varphi_\alpha( (\frac{s_n}{\sqrt{n}})^k)$ does not depend on $n$, so that  
its limit when $n \rightarrow \infty$ is that constant.  We shall show  that the uniform norm limit of 
$\frac{s_n}{\sqrt n}$ does not exist.

Firstly, we remark that,  for any self-adjoint unitaries $u$ and $v$ such 
that $uv = -vu$ with projections $p :=\frac{u + I}{2}$ and $q := \frac{v + I}{2}$, we have that 
$$
vpv = I-p, \ \ uqu = I-q, \ \ pvp = 0, \ {\text and} \ \ quq = 0.
$$
In fact, $vu = v(2p-I) = -v + 2vp$ and $uv = (2p-I)v = -v + 2pv$.  
Hence 
$$
(-v + 2vp)v + (-v + 2pv)v = (vu)v + (uv)v = (vu +uv)v = 0.
$$  
Then 
$-v^2 +2vpv -v^2 +2pv^2 = 0$, so that $vpv = I - p$. 
Since $pvp$ is self-adjoint and $(pvp)^2 = pvpvp = p(I-p)p = 0$, we have that $pvp = 0$. 

Now put $U_n :=\frac{1}{{\sqrt n}}(u_0+u_1+\cdots +u_{n-1})$ and 
$V_n := \frac{1}{{\sqrt n}}(u_n+u_{n+1}+\cdots +u_{2n-1})$.  Then $U_n$ and $V_n$ are self-adjoint unitaries.  
For any $k \in \{n,n+1, ...,2n-1\}$, we have that $U_nu_k = -u_kU_n$, so that $U_nV_n = -V_nU_n$. 
Put $Q_n = \frac{V_n + I}{2}$.  Then $Q_nV_nQ_n = Q_n$.  Apply the above remark for $u = U_n$ and $v = V_n$.  Then 
$Q_nU_nQ_n = 0$.  Since  
\begin{align*}
& U_{2n} - U_n =   \frac{1}{{\sqrt {2n}}}(u_0+u_1+\cdots +u_{2n-1}) -  \frac{1}{{\sqrt n}}(u_0+u_1+\cdots +u_{n-1})\\ 
&=  (\frac{1}{{\sqrt {2n}}} - \frac{1}{{\sqrt n}})(u_0+u_1+\cdots +u_{n-1}) 
+  \frac{1}{{\sqrt {2n}}}((u_n+u_{n+1}+\cdots +u_{2n-1}) \\
&= (\frac{1}{\sqrt 2} -1)U_n + \frac{1}{\sqrt 2}V_n, 
\end{align*}
$\|Q_n(U_{2n} -U_n)Q_n \| = \| \frac{1}{\sqrt 2}Q_n \|= \frac{1}{\sqrt 2}$.  
Hence $\|U_{2n} - U_n\| \geq \frac{1}{\sqrt 2}$.  This shows that the uniform norm limit of   
$U_n =\frac{1}{{\sqrt n}}(u_0+u_1+\cdots +u_{n-1})$ does not exist.
\end{example} 
More generally, we can show the following: 

\begin{example} 
Consider a sequence $\{u_n\}_n$ of self adjoint unitaries in $A$ satisfies
\[   u_iu_j= (-1)^{a(|i-j|)}u_ju_i \text{ and } a(K l)=1 \quad (l=1,2,3,\ldots)  \]
for some positive integer $K$.
We devide the sequence $\{u_n\}_n$ into $K$ subsequences
\[  \{u_{Kn}\}_n, \; \{u_{Kn+1}\}_n, \ldots, \; \{u_{Kn+K-1} \}_n  .   \]
Each subsequence $\{u_{Kn+k}\}_n$ $(k=0,1,2,\ldots, K-1)$ satisfies 
\[     u_{Ki +k}u_{Kj+k} = - u_{Kj+k}u_{Ki+k}  \text{ for any }  |i-j|\ge 1.  \]
By the above argument
\[   \lim_{n\to\infty} \|\frac{1}{n}(u_k + u_{K+k}+ \ldots + u_{K(n-1)+k}) \|=0, \quad k=0,1,\ldots, K-1,    \]
and
\begin{align*}
 & \frac{s_{Kn}}{Kn}  = \frac{1}{Kn}(u_0+u_1+u_2+\cdots + u_{Kn-1})  \\
=  &  \frac{1}{K} ( \frac{1}{n}(u_0+u_K+ \cdots +u_{K(n-1)}) + \cdots  
 +\frac{1}{n}(u_{K-1} + u_{2K-1} + \cdots + u_{Kn-1})). 
\end{align*}
So we have
\[   \lim_{n\to\infty} \|\frac{s_n}{n}\| = 0,   \lim_{n\to\infty} \|(\frac{s_n}{n})^k\| = 0,  \text{ and }   \lim_{n\to\infty} \varphi_\alpha((\frac{s_n}{n})^k)=0 .  \]
\end{example}


\vspace{10mm}

If the constant $C(k)$ in Theorem \ref{psudo law of large numbers}  (1) can be chosen as $C(k) = m^k$ with 
$$
m:= \varphi_\alpha (a_n)  \quad \text{ for all } n=1,2, \ldots, 
$$
and does not depned on $n$, then $(a_n)_n$ in $\M_{s.a.}$ behaves like {\it independence}.  

\begin{definition} \rm (independence)
Let $\M$ be a factor of type ${\rm II}_1$ with a normalized trace $\tau$.  
Let $\alpha: [0,1]  \rightarrow [0, 1]$ be a 
monotone increasing continuous function with $\alpha(0) = 0$ and $\alpha(1) = 1$ 
Let $\varphi_{\alpha}$  be a non-linear trace of the Choquet type associated with $\alpha$. 
A sequence $(a_n)_n$ in $\M_{s.a.}$ is called {\it independent} with respect to $\varphi_{\alpha}$  
if for any mutually different $i_1, i_2, \ldots, i_m$, 
$$
\varphi_{\alpha}(a_{i_1}a_{i_2} \dots a_{i_m})
= \varphi_{\alpha}(a_{i_1}) \varphi_{\alpha}(a_{i_2})  \dots \varphi_{\alpha}(a_{i_m}). 
$$
A sequence$(a_n)_n$ in ${\M}_{s.a.}$ is called {\it subindependent} with respect to $\varphi_{\alpha}$   if 
for any mutually different $i_1, i_2, \dots, i_m$, $m = 1,2,3,...$, 
$$
\varphi_{\alpha}({\rm Re} (a_{i_1}a_{i_2} \dots a_{i_m}) )
\leq \varphi_{\alpha}(a_{i_1}) \varphi_{\alpha}(a_{i_2})  \dots \varphi_{\alpha}(a_{i_m}). 
$$
A sequence$(a_n)_n$ in ${\M}_{s.a.}$ is called {\it superindependent} with respect to $\varphi_{\alpha}$   if 
for any mutually different $i_1, i_2, \dots, i_m$, $m = 1,2,3,...$, 
$$
\varphi_{\alpha}({\rm Re} (a_{i_1}a_{i_2} \dots a_{i_m})) 
\geq \varphi_{\alpha}(a_{i_1}) \varphi_{\alpha}(a_{i_2})  \dots \varphi_{\alpha}(a_{i_m}). 
$$
For example, the sequence $(p_n)_n$ in Example \ref{example:UHF} is independent, subindependent, and superindependent 
with respect to $\varphi_{\alpha}$. 
\end{definition}

\begin{corollary} \label{law of large numbers}
Let $\M$ be a factor of type ${\rm II}_1$ with a normalized trace $\tau$.  
Let $\alpha: [0,1]  \rightarrow [0, 1]$ be a monotone increasing continuous function with $\alpha(0) = 0$ 
and $\alpha(1) = 1$, and
$\varphi_{\alpha}$ a non-linear trace of the Choquet type associated with $\alpha$.  
Let $(a_n)_n$ be an operator norm bounded sequence $(a_n)_n$ in ${\M}_{s.a.}$ with a constant $m \in \R$ such that 
\[   m= \varphi_\alpha (a_n)  \quad \text{ for all } n=1,2, \ldots  \] 
and put $s_n = a_1 + a_2 + \dots +a_n$.
\begin{enumerate}
\item[(1)] If $\alpha$ is concave and the sequence $(a_n)_n$ is subindependent with respect to $\varphi_\alpha$, 
then for any fixed $k \in  \N$, we have that 
\[  \limsup _{n \to \infty}  \varphi_{\alpha}((\frac{s_n}{n})^k) \leq m^k .\]
\item[(2)] If $\alpha$ is convex and the sequence $(a_n)_n$ is superindependent with respect to $\varphi_\alpha$, 
then for any fixed $k \in  \N$, we have that 
\[  \liminf _{n \to \infty}  \varphi_{\alpha}((\frac{s_n}{n})^k) \geq m^k .\]
\item[(3)] Suppose that $\alpha$ is concave, the sequence $(a_n)_n$ is subindependent with respect to $\varphi_\alpha$, 
and the sequence $(a_n)_n$ is superindependent with respect to $\varphi_{\overline \alpha}$ with a constant 
$\overline{m} \in \R$ such that 
\[ {\overline  m} := \varphi_{\overline \alpha} (a_n)  \quad \text{ for all } n=1,2, \ldots  \] 
then for any fixed $k \in  \N$, we have that 
\[
{\overline m}^k \leq \liminf _{n \to \infty} \varphi_{\overline \alpha}((\frac{s_n}{n})^k) 
\leq \liminf _{n \to \infty}  \varphi_{\alpha}((\frac{s_n}{n})^k)  
\leq \limsup _{n \to \infty}  \varphi_{\alpha}((\frac{s_n}{n})^k)  \leq m^k. 
\] 
Moreover any accumulation point of the sequence $\varphi_{\alpha}((\frac{s_n}{n})^k)$ is in the interval 
$[{\overline m}^k, m^k]$. 
\end{enumerate}
\end{corollary}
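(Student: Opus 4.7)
The plan is to recognize each part as a direct specialization of Theorem \ref{psudo law of large numbers}, with part (3) additionally relying on the pointwise comparison $\varphi_{\overline{\alpha}} \le \varphi_{\alpha}$ from Proposition \ref{superadditivity}(3). The key observation is that the (sub/super)-independence hypothesis furnishes the explicit constants $C(k)$ and $\hat{C}(k)$ demanded by the previous theorem.

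For (1), subindependence with respect to $\varphi_{\alpha}$ together with $\varphi_{\alpha}(a_n) = m$ gives, for any mutually distinct $i_1, \ldots, i_k$,
\[
\varphi_{\alpha}(\mathrm{Re}(a_{i_1} a_{i_2} \cdots a_{i_k})) \le \varphi_{\alpha}(a_{i_1}) \cdots \varphi_{\alpha}(a_{i_k}) = m^k,
\]
so taking $C(k) := m^k$ and applying Theorem \ref{psudo law of large numbers}(1) yields $\limsup_n \varphi_{\alpha}((s_n/n)^k) \le m^k$. Part (2) is entirely analogous: superindependence yields $\varphi_{\alpha}(\mathrm{Re}(a_{i_1}\cdots a_{i_k})) \ge m^k$, so one takes $\hat{C}(k) := m^k$ and invokes Theorem \ref{psudo law of large numbers}(2). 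The only hypotheses to keep track of are operator norm boundedness of $(a_n)_n$ (explicitly assumed) and the convexity/concavity of the weight.

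For (3), I would proceed in three stages. First, apply (1) directly to obtain $\limsup_n \varphi_{\alpha}((s_n/n)^k) \le m^k$. Second, by Proposition \ref{superadditivity}(1), concavity of $\alpha$ implies convexity of $\overline{\alpha}$; since by hypothesis $(a_n)_n$ is superindependent with respect to $\varphi_{\overline{\alpha}}$ with $\varphi_{\overline{\alpha}}(a_n) = \overline{m}$, part (2) applied to the non-linear trace $\varphi_{\overline{\alpha}}$ produces $\liminf_n \varphi_{\overline{\alpha}}((s_n/n)^k) \ge \overline{m}^k$. Third, since $s_n$ is self-adjoint, so is $(s_n/n)^k$, and Proposition \ref{superadditivity}(3) (with $\alpha$ concave) gives $\varphi_{\overline{\alpha}}((s_n/n)^k) \le \varphi_{\alpha}((s_n/n)^k)$ for every $n$, whence $\liminf_n \varphi_{\overline{\alpha}}((s_n/n)^k) \le \liminf_n \varphi_{\alpha}((s_n/n)^k)$. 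Chaining the three inequalities gives the claimed sandwich $\overline{m}^k \le \liminf \le \limsup \le m^k$.

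The statement about accumulation points is then automatic, since every accumulation point of a bounded sequence of real numbers lies between its limit inferior and its limit superior. There is no real obstacle in this corollary: the conceptual and technical work was absorbed into Theorem \ref{psudo law of large numbers} and Proposition \ref{superadditivity}, and what remains is simply to feed the independence hypotheses into the theorem and then assemble the three inequalities using the $\overline{\alpha} \le \alpha$ comparison on self-adjoints.
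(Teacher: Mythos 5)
Your proposal is correct and follows essentially the same route as the paper: parts (1) and (2) are exactly the specializations of Theorem \ref{psudo law of large numbers} with $C(k)=\hat C(k)=m^k$ supplied by the (sub/super)-independence hypotheses, and part (3) is obtained, just as in the paper's (terser) proof, by combining (1) for $\alpha$, (2) for the convex dual $\overline{\alpha}$, and the comparison $\varphi_{\overline{\alpha}}\le\varphi_{\alpha}$ on self-adjoints from Proposition \ref{superadditivity}. The accumulation-point remark is, as you say, immediate from the liminf/limsup sandwich.
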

\begin{proof} (1) and (2) are direct. 
$(3)$ Since $\alpha$ is concave,  $\overline \alpha$ is convex and  
 $\varphi_{\overline \alpha}(a) \leq \varphi_{\alpha}(a)$ for any $a \in {\M}_{s.a.}$, 
 $(1)$ and $(2)$ implies $(3)$. 
\end{proof}

Consider the case that $\alpha(t) = t$ for $t \in [0,1]$.  Then $\varphi_{\alpha}$ become the linear trace $\tau$ on $\M$.  
And as a corollary of the above Corollary \ref{law of large numbers}, we get a usual {\it equality formula} of the 
law of large numbers. 

\begin{corollary}
Let $\M$ be a factor of type ${\rm II}_1$ with a normalized trace $\tau$. 
Let $(a_n)_n$ be an operator norm bounded sequence $(a_n)_n$ in ${\M}_{s.a.}$ and 
put $s_n = a_1 + a_2 + \dots +a_n$.
Supposet that $(a_n)_n$ is independent with respect to $\tau$ and 
$$
\tau(a_1) = \tau(a_2) = \dots   (=:m). 
$$
Then for any fixed $k \in {\Bbb N}$, we have that
$$
\lim_{n \rightarrow \infty}  \tau((\frac{s_n}{n})^k) =  m^k. 
$$
Moreover for any continuous real-valued function $f$ on ${\Bbb R}$, 
we have that 
$$
\lim_{n \rightarrow \infty}  \tau(f(\frac{s_n}{n})) =  f(m). 
$$
\end{corollary}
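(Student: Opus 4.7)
The plan is to obtain this corollary as a direct specialization of Corollary~\ref{law of large numbers} to the linear weight $\alpha(t) = t$, after which part~(2) will follow from part~(1) by Weierstrass approximation. For $\alpha(t) = t$ the example shown earlier gives $\varphi_\alpha = \tau$ on $\M^+$, and the translatable extension formula $\varphi_\alpha^t(a) = \varphi_\alpha(a+cI) - c = \tau(a+cI) - c = \tau(a)$ shows that $\varphi_\alpha = \tau$ also on $\M_{s.a.}$ under the convention of the caution. Moreover $\overline{\alpha}(t) = 1 - (1-t) = t$, so $\overline{\alpha} = \alpha$ and consequently $\overline{m} = m$. A linear function is simultaneously concave and convex, so the curvature hypotheses of Corollary~\ref{law of large numbers}(3) are automatic.

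Next I would verify that $\tau$-independence makes $(a_n)_n$ simultaneously subindependent and superindependent with respect to $\varphi_\alpha = \tau$. For distinct $i_1, \dots, i_m$, the hypothesis gives $\tau(a_{i_1} \cdots a_{i_m}) = \tau(a_{i_1}) \cdots \tau(a_{i_m}) = m^m \in \R$. Using the trace property $\tau(x^*) = \overline{\tau(x)}$ and $\mathrm{Re}(x) = \tfrac12(x+x^*)$, I obtain
$$
\tau(\mathrm{Re}(a_{i_1} \cdots a_{i_m})) = \mathrm{Re}(\tau(a_{i_1} \cdots a_{i_m})) = \mathrm{Re}(m^m) = m^m,
$$
so equality holds in both the subindependence and superindependence inequalities. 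Applying Corollary~\ref{law of large numbers}(3) with $\overline{m}=m$ collapses the sandwich
$$
m^k \le \liminf_{n \to \infty} \tau\bigl((s_n/n)^k\bigr) \le \limsup_{n \to \infty} \tau\bigl((s_n/n)^k\bigr) \le m^k
$$
to equality, proving the $k$-th moment statement.

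For the continuous functional calculus assertion, set $K := \sup_n \|a_n\| < \infty$, so that $\|s_n/n\| \le K$ for every $n$ and $|m| = |\tau(a_1)| \le \|a_1\| \le K$; in particular $\sigma(s_n/n) \cup \{m\} \subset [-K,K]$. Given $\epsilon > 0$, Weierstrass' theorem produces a real polynomial $p$ with $|f(x) - p(x)| < \epsilon$ on $[-K,K]$. Continuous functional calculus then yields $\|f(s_n/n) - p(s_n/n)\| \le \epsilon$, so $|\tau(f(s_n/n)) - \tau(p(s_n/n))| \le \epsilon$ because $\tau$ has norm one. By part~(1) applied monomial by monomial together with the linearity of $\tau$, $\tau(p(s_n/n)) \to p(m)$ as $n \to \infty$, and $|f(m) - p(m)| \le \epsilon$, so the triangle inequality gives $\limsup_n |\tau(f(s_n/n)) - f(m)| \le 2\epsilon$, whence the limit.

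There is no substantive obstacle: the only mildly delicate step is the reduction of the defined notion of independence (stated in terms of $\mathrm{Re}(\cdot)$) to a genuine equality statement for $\tau$, which works precisely because $\tau$ is linear and tracial. For a genuinely non-linear $\varphi_\alpha$ this reduction is unavailable, which is exactly why the general law of large numbers is forced to be stated as an inequality.
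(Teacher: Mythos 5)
Your proposal is correct and follows essentially the same route as the paper: specialize Corollary~\ref{law of large numbers} to $\alpha(t)=t$ (so $\varphi_\alpha=\varphi_{\overline\alpha}=\tau$, $\overline m=m$, and the sandwich collapses to $m^k$), then deduce the continuous-function statement by uniform polynomial approximation on $[-K,K]$ together with $\|\tau\|=1$. Your explicit check that $\tau$-independence gives both subindependence and superindependence via $\tau(\mathrm{Re}\,x)=\mathrm{Re}\,\tau(x)$ is a detail the paper leaves implicit, and it is a welcome clarification rather than a deviation.
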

\begin{proof}
Since $\alpha(t) = t$ for $t \in [0,1]$, ${\overline{\alpha}}(t) = t$ and  
$\varphi_{\alpha} = \varphi_{\overline{\alpha}} = \tau$. It is clear that $\alpha(t) = t$ is concave and convex.   
Applying the above Corollary \ref{law of large numbers}, we have that 
$$
m^k = \overline{m} ^k \leq \liminf _{n \rightarrow \infty}  \varphi_{\alpha}((\frac{s_n}{n})^k) 
\leq \limsup _{n \rightarrow \infty}  \varphi_{\alpha}((\frac{s_n}{n})^k) \leq m^k
$$
Hence 
$$
\liminf _{n \rightarrow \infty}  \tau((\frac{s_n}{n})^k )= \limsup _{n \rightarrow \infty} \tau((\frac{s_n}{n})^k) = \lim_{n \rightarrow \infty}  \tau((\frac{s_n}{n})^k) =  m^k. 
$$
Since $\tau$ is linear, for any polynomial $g$ 
$$
\lim_{n \rightarrow \infty}  \tau(g(\frac{s_n}{n})) =  g(m). 
$$
Moreover, fix a continuous real-valued function $f$ on ${\Bbb R}$.  
Since $(a_n)_n$ is an operator norm bounded sequence $(a_n)_n$ in ${\M}_{s.a.}$, there exists a constant 
$K > 0$ such that for any $n \in {\Bbb N}$ we have that $\|a_n\| \leq K$.  Then $||\frac{s_n}{n}|| \leq K$  and 
any spectrum $\sigma (\frac{s_n}{n})$ is included in $[-K,K]$. And $f$ is uniformly approximated by a polynomial sequence 
$(f_i)_i$ on $[-K,K]$.  Therefore for any $\epsilon > 0$, there exists $i \in {\Bbb N}$ such that
$$
||f -f_i||_{\infty} :=\sup \{|(f - f_i)(x)| ; x \in [-K,K]\} < \frac{\epsilon}{3}.
$$
Then
$$
||f(\frac{s_n}{n}) -f_i(\frac{s_n}{n})|| =||(f-f_i)  (\frac{s_n}{n})|| \leq ||f -f_i||_{\infty}< \frac{\epsilon}{3}.
$$
Since $\M$ is a factor of type ${\rm II}_1$, $\tau$ is operator norm continuous and $||\tau|| =1$. 
Thus 
$$
|\tau(f(\frac{s_n}{n})) - \tau(f_i(\frac{s_n}{n}))| < \frac{\epsilon}{3}.
$$
Since $f_i$ is a polynomial, There exists $n_0 \in {\Bbb N}$ sucn that for any $n \in {\Bbb N}$ if $n \geq n_0$, 
then
$$
|\tau(f_i(\frac{s_n}{n})) - f_i(m)| < \frac{\epsilon}{3}. 
$$
Since $m = \tau(a_1) = \tau(\frac{s_n}{n}) \in [-K,K]$, $|f_i(m) -f(m)| < \frac{\epsilon}{3}.$ 
Therefore, 
\begin{align*}
&  |\tau(f(\frac{s_n}{n})) - f(m)| \\
&\leq |\tau(f(\frac{s_n}{n})) - \tau(f_i(\frac{s_n}{n}))| + 
|\tau(f_i(\frac{s_n}{n})) - f_i(m)| + |f_i(m) -f(m)| < \epsilon, 
\end{align*}     
which proves the desired conclusion.
\end{proof}

For a subindependent positive sequence, we need to restrict to {\it absolutely monotone functions}.  

\begin{definition} \rm
A $C^{\infty}$ function $f$ on a open interval $(a,b)$ is called {\it absolutely monotone} if 
$$
 f^{(n)}(x) \geq 0   \ \ \ \text{ for  any } x \in (a,b) \text{ and for all }  n = 0,1,2,\dots .  
$$ 
Since each  $f^{(n)}$ is increasing on $(a,b)$,  $f^{(n)}$ is extended on $[a,b)$ continuously  by 
putting $f^{(n)} (a) = \lim_{ x \searrow a}  f^{(n)} (x)$. Moreover,  $f$ can be extended to an analytic function  on 
a open disc $\{z \in {\mathbb C} \ |  \ |z -a| < b-a \}$, known as little Bernstein Theorem. 
See, for examle, Widder \cite{widder}.  
\end{definition}

\begin{corollary}
Let $\M$ be a factor of type ${\rm II}_1$ with a normalized trace $\tau$.  
Let $\alpha: [0,1]  \rightarrow [0, 1]$ be a monotone increasing continuous function with $\alpha(0) = 0$ 
and $\alpha(1) = 1$.  
Suppose that $\alpha$ is concave.  
Let $(a_n)_n$ be an operator norm bounded sequence $(a_n)_n$ in ${\M}_{+}$ and 
put $s_n = a_1 + a_2 + \dots +a_n$.
Supposet that $(a_n)_n$ is subindependent with respect to $\varphi_{\alpha}$ and 
$$
\varphi_{\alpha}(a_1) = \varphi_{\alpha}(a_2) = \dots   (=:m)
$$
Then for any continuous function $f$ on $[0,\infty)$ such that $f$ is absolutely monotone  on $(0,\infty)$ , we have that 
$$
\limsup _{n \rightarrow \infty}  \varphi_{\alpha}(f(\frac{s_n}{n})) \leq f(m). 
$$
\end{corollary}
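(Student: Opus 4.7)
The plan is to reduce the statement for $f$ to the polynomial case already handled by Corollary \ref{law of large numbers}~(1), using the power series expansion afforded by absolute monotonicity. By the little Bernstein theorem cited in the paper, since $f$ is absolutely monotone on $(0,\infty)$, the function $f$ extends to an analytic function on all of $\C$ (the disc in the theorem has radius $b-a = \infty$). Hence $f(x) = \sum_{k=0}^{\infty} c_k x^k$ with $c_k = f^{(k)}(0)/k! \geq 0$, and the series converges for every $x \in [0,\infty)$.

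Fix $K > 0$ with $\|a_n\| \leq K$, so $\|s_n/n\| \leq K$ and $\sigma(s_n/n) \subset [0,K]$. For each $N \in \N$ set $f_N(x) = \sum_{k=0}^{N} c_k x^k$ and $R_N(x) = f(x) - f_N(x) \geq 0$. Each $g_k(x) := c_k x^k$ with $k \geq 1$ is a monotone increasing function in $C([0,\infty))^+$ with $g_k(0) = 0$, so monotonic increasing additivity on the spectrum together with positive homogeneity of $\varphi_\alpha$ yields
\begin{equation*}
\varphi_\alpha\Bigl(\sum_{k=1}^{N} c_k (s_n/n)^k\Bigr) = \sum_{k=1}^{N} c_k \varphi_\alpha((s_n/n)^k).
\end{equation*}
Peeling off $c_0 I$ via translatability of $\varphi_\alpha^t$ (which equals $\varphi_\alpha$ on $\M^+$) gives $\varphi_\alpha(f_N(s_n/n)) = c_0 + \sum_{k=1}^{N} c_k \varphi_\alpha((s_n/n)^k)$. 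Applying Corollary \ref{law of large numbers}~(1) term by term (each $c_k \geq 0$ and the sum is finite, so $\limsup$ distributes through) produces
\begin{equation*}
\limsup_{n \to \infty} \varphi_\alpha(f_N(s_n/n)) \leq c_0 + \sum_{k=1}^{N} c_k m^k = f_N(m).
\end{equation*}

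For the tail, the triangle inequality from Proposition \ref{prop:tranlatable}~(5), valid because $\alpha$ is concave, gives
\begin{equation*}
\varphi_\alpha(f(s_n/n)) \leq \varphi_\alpha(f_N(s_n/n)) + \varphi_\alpha(R_N(s_n/n)).
\end{equation*}
Since $R_N$ is monotone increasing on $[0,\infty)$ with non-negative coefficients, $\|R_N(s_n/n)\| \leq R_N(K)$, and the bound $|\varphi_\alpha(\cdot)| \leq \|\cdot\|$ gives $\varphi_\alpha(R_N(s_n/n)) \leq R_N(K)$. Therefore $\limsup_n \varphi_\alpha(f(s_n/n)) \leq f_N(m) + R_N(K)$. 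Letting $N \to \infty$, $f_N(m) \to f(m)$ and $R_N(K) = f(K) - f_N(K) \to 0$, which is exactly the desired conclusion.

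The one delicate point is the constant term $c_0$: monotonic increasing additivity on the spectrum requires each summand to vanish at $0$ when the operator is not invertible, and we cannot exclude $0$ from $\sigma(s_n/n)$. This is handled cleanly by separating $c_0 I$ and invoking translatability. Everything else is a standard approximation argument in which absolute monotonicity provides precisely the sign data ($c_k \geq 0$) needed to convert a power series identity into inequalities compatible with the non-linear, non-additive trace $\varphi_\alpha$.
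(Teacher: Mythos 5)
Your proof is correct and follows essentially the same route as the paper: expand $f$ as a power series with non-negative coefficients via the little Bernstein theorem, handle the truncated polynomial through Corollary \ref{law of large numbers} (1), and control the tail uniformly on $[0,K]$. The only cosmetic differences are that the paper bounds the tail by $f \leq f_i + \epsilon$ on $[0,K]$ using monotonicity and translatability instead of your triangle-inequality-plus-$\|\cdot\|$ estimate, and your explicit peeling off of the constant term $c_0$ is a slightly more careful treatment than the paper's series starting at $k=1$.
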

\begin{proof} Since  $(a_n)_n$ be an operator norm bounded sequence $(a_n)_n$, there exists a constant $K > 0$ such that 
$||a_n||\leq K$ for all $n \in {\Bbb N}.$ \\

Firstly, assume that $f$ is a polynomial such that $f(x) = \sum_{k =1} ^i a_k x^k$ for some $a_1 \geq 0, \dots, a_i \geq 0$. 
Then 
$$
\limsup _{n \rightarrow \infty}  \varphi_{\alpha}(f(\frac{s_n}{n})) 
\leq \sum_{k=1} ^i \limsup _{n \rightarrow \infty} a_k \varphi_{\alpha}((\frac{s_n}{n})^k) 
\leq  \sum_{k =1} ^i a_k m^k = f(m) .
$$
Secondly, suppose that $f$ is a continuous function on $[0,\infty)$ such that $f$ is absolutely monotone  on $(0,\infty)$.  
Then 
$$
f(x) = \sum_{k =1} ^{\infty} a_k  x^k \text{ for some } a_k \geq 0, (k = 1,2,\ldots \infty), \quad  x \in [0,K]. 
$$
Hence  $f$ is unformly 
approximated by finite sums 
$$
f_i(x) := \sum_{k =1} ^i a_k x^k \leq f(x) \ \ \ (x \in [0,K]).
$$
Take any $\epsilon > 0$. Then there exists $i \in {\Bbb N}$ such that 
$$
f(x) \leq f_i(x) + \epsilon \ \ \ (x \in [0,K]). 
$$
Since $\limsup _{n \rightarrow \infty}  \varphi_{\alpha}(f_i(\frac{s_n}{n})) \leq f_i(m)$, 
there exists $n_0 \in {\Bbb N}$ such that for all $n \geq n_0$ 
$$
\varphi_{\alpha}(f_i(\frac{s_n}{n})) \leq f_i(m) + \epsilon. 
$$
Since the spectrum $\sigma(\frac{s_n}{n}) \subset [0,K]$, 
$$
f(\frac{s_n}{n}) \leq f_i(\frac{s_n}{n}) + \epsilon I. 
$$
Since $\varphi_{\alpha}$ is monotone and translation invariant, 
\begin{align*}
& \varphi_{\alpha} (f(\frac{s_n}{n}) )  \leq \varphi_{\alpha}( f_i(\frac{s_n}{n}) + \epsilon I) \\
& = \varphi_{\alpha}( f_i(\frac{s_n}{n})) + \epsilon \leq  f_i(m) + \epsilon +  \epsilon \leq f(m) + 2 \epsilon. 
\end{align*}
Then 
$$
\limsup _{n \rightarrow \infty}  \varphi_{\alpha}(f(\frac{s_n}{n})) \leq f(m) + 2 \epsilon.
$$
This implies the statement.
\end{proof}

We give a sufficient condition that the limit $\lim _{n \rightarrow \infty}  \varphi_{\alpha}((\frac{s_n}{n})^k)$ exists, 
which includes the Example \ref{example:UHF}. 

\begin{proposition} 
Let $\M$ be a factor of type ${\rm II}_1$ with a normalized trace $\tau$.  
Let $\alpha: [0,1]  \rightarrow [0, 1]$ be a monotone increasing continuous function with $\alpha(0) = 0$ 
and $\alpha(1) = 1$.  
Suppose that $\alpha$ is concave.  
Let $\sigma: \M \rightarrow \M$ be a *-endomorphism with $\sigma(I) = I.$
Let $(a_n)_n$ be an operator norm bounded sequence $(a_n)_n$ in ${\M}_+$ satisfying 
$a_n = \sigma^{n-1}(a_1)$ for all $n \in {\Bbb N}$. 
Put $s_n = a_1 + a_2 + \dots +a_n$.
Then for any fixed $k \in {\Bbb N}$, the limit of the sequence $(\varphi_{\alpha}((\frac{s_n}{n})^k))_n$ exists 
and equals to  $\inf_n  \varphi_{\alpha}((\frac{s_n}{n})^k)$.
\end{proposition}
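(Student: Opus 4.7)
The plan is to reduce the claim to Fekete's subadditive lemma applied to the auxiliary sequence $u_n := \varphi_{\alpha}(s_n^k)^{1/k}$ for $k \geq 1$. The crucial identity, visible from $a_j = \sigma^{j-1}(a_1)$ together with the multiplicativity of $\sigma$, is the telescoping decomposition
\[
s_{n+m} \;=\; a_1 + \dots + a_n + \sigma^n(a_1) + \dots + \sigma^n(a_m) \;=\; s_n + \sigma^n(s_m),
\]
with both summands in $\M_{+}$.

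First I would verify that $\sigma$ is automatically trace-preserving. Since $\M$ is a factor of type $\mathrm{II}_1$, its normal tracial state $\tau$ is the unique one, and $\tau \circ \sigma$ is again a normal tracial state (using $\sigma(I) = I$ and that $\sigma$ is a $*$-endomorphism), so $\tau \circ \sigma = \tau$. Because $\sigma$ carries spectral projections of $a \in \M_+$ to those of $\sigma(a)$, the defining integral in Definition 2.9 gives
\[
\varphi_{\alpha}(\sigma(a)) \;=\; \int_0^{\infty} \alpha\bigl(\tau(\sigma(e_{(s,\infty)}(a)))\bigr)\, ds \;=\; \varphi_{\alpha}(a),
\]
and iterating, $\varphi_{\alpha}(\sigma^n(b)) = \varphi_{\alpha}(b)$ for all $n$ and all $b \in \M_{+}$.

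Next I apply Theorem \ref{thm:triangle inequality} with $p = k$ to the positive operators $s_n$ and $\sigma^n(s_m)$, whose sum $s_{n+m}$ is also positive, so all absolute values drop out. Using $\sigma^n(s_m)^k = \sigma^n(s_m^k)$ together with the invariance just established, this yields
\[
u_{n+m} \;=\; \varphi_{\alpha}(s_{n+m}^k)^{1/k} \;\leq\; \varphi_{\alpha}(s_n^k)^{1/k} + \varphi_{\alpha}(\sigma^n(s_m^k))^{1/k} \;=\; u_n + u_m.
\]
Fekete's lemma then asserts that $\lim_n u_n/n$ exists in $[0,\infty)$ and equals $\inf_n u_n/n$.

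Finally, by positive homogeneity of $\varphi_{\alpha}$, we have $u_n/n = \varphi_{\alpha}((s_n/n)^k)^{1/k}$. Since $t \mapsto t^k$ is a continuous strictly increasing bijection of $[0,\infty)$, taking $k$-th powers commutes with both the limit and the infimum, so
\[
\lim_{n \to \infty} \varphi_{\alpha}\!\left(\left(\tfrac{s_n}{n}\right)^{\!k}\right) \;=\; \inf_{n} \varphi_{\alpha}\!\left(\left(\tfrac{s_n}{n}\right)^{\!k}\right).
\]
The main obstacle is simply to isolate the correct subadditive quantity; concavity of $\alpha$ enters only through the triangle inequality of Theorem \ref{thm:triangle inequality}, and trace-preservation of $\sigma$ is forced by factoriality, so once the decomposition $s_{n+m} = s_n + \sigma^n(s_m)$ is spotted the argument is formal.
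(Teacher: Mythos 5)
Your proposal is correct and follows essentially the same route as the paper: the same subadditive quantity $\varphi_{\alpha}(s_n^k)^{1/k}$, the same decomposition $s_{n+m}=s_n+\sigma^n(s_m)$ with $\varphi_{\alpha}$-invariance of $\sigma$ forced by uniqueness of the trace, the triangle inequality of Theorem \ref{thm:triangle inequality} from concavity of $\alpha$, and Fekete's lemma, followed by passing through the increasing map $t\mapsto t^k$.
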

\begin{proof} For $x \in \M$, put $|||x|||_{\alpha,k}:= \varphi_\alpha(|x|^k)^{1/k} $.  Then by 
Theorem \ref{thm:triangle inequality}, this gives a norm on $\M$, since $\alpha$ is concave. 
Since $\tau$ is the unique trace, $\sigma$ preserves the trace $\tau$. Hence 
$|||\sigma(x)|||_{\alpha,k} = |||x|||_{\alpha,k}.$ 
Put 
$$
x_n := |||s_n|||_{\alpha,k} = |||a_1 + a_2 + \dots +a_n|||_{\alpha,k} .
$$
Then 
\begin{align*}
     & x_{n+m}  = |||a_1 + a_2 + \dots +a_{n+m}|||_{\alpha,k} \\
   \leq & |||a_1 + a_2 + \dots +a_n|||_{\alpha,k} + |||a_{n+1} + a_{n+2} + \dots +a_{n+m}|||_{\alpha,k} \\
   = & |||a_1 + a_2 + \dots +a_n|||_{\alpha,k} + |||\sigma^n(a_1 + a_2 + \dots +a_m)|||_{\alpha,k} \\
   = & |||a_1 + a_2 + \dots +a_n|||_{\alpha,k} + |||a_1 + a_2 + \dots +a_m|||_{\alpha,k} = x_n + x_m.
\end{align*}
Thus $(x_n)_n$ satisfies subadditivity. By Fekete's lemma, the limit of the sequence $(\frac{x_n}{n})_n$ exists 
and  equals to  $\inf_n  \frac{x_n}{n}$. 
We note that 
$$
(\frac{x_n}{n})^k =  \varphi_{\alpha}((\frac{s_n}{n})^k) 
$$
and  $t \mapsto t^k$ is continuous and monotone increasing.  
Hence an inequality
$$
\lim_{n \to \infty} \frac{(\varphi_{\alpha}(s_n^k))^{1/k}}{n} 
= \inf_n \frac{(\varphi_{\alpha}(s_n^k))^{1/k}}{n} 
\leq  \frac{(\varphi_{\alpha}(s_n^k))^{1/k}}{n}
$$
implies that 
$$
\lim_{n \to \infty} \frac{\varphi_{\alpha}(s_n^k)}{n^k}
 \leq  \frac{ \varphi_{\alpha}(s_n^k)}{n^k}
$$
for any $n \in {\Bbb N}$.  Therefore 
$$
\lim_{n \to \infty} \varphi_{\alpha}((\frac{s_n}{n})^k)
 \leq  \inf_n  \varphi_{\alpha}((\frac{s_n}{n})^k).
$$
This implies the statement.
\end{proof}

\end{document}